\documentclass{amsart}
\usepackage[utf8x]{inputenc}
\usepackage{amsthm}
\usepackage{amsmath}
\usepackage{amssymb}
\usepackage{amsfonts}
\usepackage{amscd}

%TCIDATA{OutputFilter=LATEX.DLL}
%TCIDATA{Version=4.00.0.2312}
%TCIDATA{Created=Thursday, September 16, 2004 23:52:00}
%TCIDATA{LastRevised=Friday, September 17, 2004 00:12:35}
%TCIDATA{<META NAME="GraphicsSave" CONTENT="32">}
%TCIDATA{<META NAME="DocumentShell" CONTENT="Standard LaTeX\Blank - Standard LaTeX Article">}
%TCIDATA{CSTFile=40 LaTeX article.cst}

\newtheorem{theorem}{Theorem}

\newtheorem{corollary}[theorem]{Corollary}

\newtheorem{definition}[theorem]{Definition}
\newtheorem{example}[theorem]{Example}

\newtheorem{proposition}[theorem]{Proposition}
\newtheorem{remark}[theorem]{Remark}

\newcommand{\R}{\mathbb{R}}

\newcommand{\Z}{\mathbb{Z}}
\newcommand{\Q}{\mathbb{Q}}

\newcommand{\T}{\mathbb{T}}

\begin{document}

\title[Block Conjugacy] {Block Conjugacy of Irreducible Toral Automorphisms}
\author[Bakker]{Lennard F. Bakker}
\address{Department of Mathematics, Brigham Young University, Provo, Utah, USA}
\email{bakker@math.byu.edu}
\author[Martins Rodrigues]{Pedro Martins Rodrigues}
\address{Department of Mathematics, Instituto Superior T\'ecnico, Univ. Tec. Lisboa, Lisboa, Portugal}
\email{pmartins@math.ist.utl.pt}

%\thanks{}

\subjclass[2000]{ }
%\date{July 23, 2015}
\keywords{Irreducible Toral Automorphisms, Block Conjugacy, Weak Equivalence of Ideals}
\commby{}

\begin{abstract} We introduce a relation of block conjugacy for irreducible toral automorphism, and prove that block conjugacy is equivalent to weak equivalence of the ideals associated to the automorphisms. We characterize when block conjugate automorphisms are actually conjugate in terms of a group action on invariant and invariantly complemented subtori, and detail the relation of block conjugacy with a Galois group. We also investigate the nature of the relationship between ideals associated to non-block conjugate irreducible automorphisms.
 \end{abstract}

\maketitle

\section{Introduction}

Topological conjugacy of irreducible hyperbolic toral automorphisms is easily reduced to an algebraic and number theoretic problem. This reduction, however, does not  answer the question of finding a complete set of dynamically significant and computable conjugacy invariants. It seems that results in this direction should result from the study of the connections between the dynamical and the algebraic or number theoretic settings. The relation between algebraic number theory and integer matrices is a classical theme, explored by, among others, Olga Taussky (see \cite{Ta1,Ta2,Ta3,Ta4,Ta5,Ta6,Ta7,Ta8,Ta}). The study of integer matrices in an algebraic number theoretic setting implies the consideration of objects and problems that are usually not thoroughly studied in the theory, namely, ideals in subrings of the rings of algebraic integers in number fields. An important contribution in this direction was the paper by Dade, Taussky, and Zassenhaus \cite{DTZ}. The present paper is, partly, an attempt to present some of the consequences of the results therein, in a form suitable for its use in the dynamical systems context.

In \cite{BMR}, a framework was introduced for the study of the actions of hyperbolic automorphisms in the rational torus, i.e., in the set of periodic points. This consists, briefly, in considering the action of the automorphisms on the profinite completion of $\Z^{n}$, which is the dual group of the direct limit of the family of finite invariant groups of periodic orbits in the torus $T^{n}$. One of the results stated there is that a sufficient condition for conjugacy of the actions of two hyperbolic toral automorphisms on this profinite group, is that the associated ideal classes (see next section) are weakly equivalent.  This condition is also necessary, as is proved in a forthcoming paper.

The main result of this paper is the establishment of the equivalence between the weak equivalence of ideals and an equivalence relation called $2$-block conjugacy between irreducible automorphisms. The first examples of $2$-block conjugacy were produced not many years ago in \cite{MS1}. For these examples, one of the two irreducible automorphisms was a companion matrice, and sufficient conditions on the associated ideals were given. But the main result of this paper supersedes the earlier sufficient conditions and does not require one of the two automorphisms to be a companion matrix.

The paper is organized as follows. The second section of the paper gathers the basic algebraic concepts and results needed for the sequel. The third section defines and gives the basic theory of the relation of $k$-block conjugacy for an integer $k\geq 2$. The fourth section proves the main result that weak equivalence of ideals is equivalent to $2$-block conjugacy of automorphisms. The fifth section gives a characterization of when $2$-block conjugate automorphisms are actually conjugate in terms of a group action on invariant and invariantly complemented subtori, and details a connection of $2$-block conjugacy with a Galois group.  The sixth section investigates the nature of the relationship between ideals associated to automorphisms that are not $2$-block  conjugate.

\section{Toral automorphisms and Ideal classes}

For a fixed canonical basis in $\R^{n}$, an automorphism of the torus $\T^{n}=\R^{n}/\Z^{n}$ is represented by an invertible integer matrix, i.e., an element $A \in GL_{n}(\Z)$. The automorphism is irreducible if and only if the characteristic polynomial $f(t)$ of the matrix is irreducible over $\Z$; it is hyperbolic if and only if $f(t)$ is hyperbolic, i.e., if it has no roots of absolute value one. Two automorphisms are topologically conjugate if and only if the corresponding matrices $A$ and $B$ are algebraically conjugated in this group, i.e., if and only if there exists $P \in GL_{n}(\Z)$ such that $P A=B P$ (see \cite{AP}). We denote this relation as $A \approx B$. The characteristic polynomial is an obvious conjugacy invariant that determines, and is determined by, the two fundamental invariants of topological entropy and the Artin-Mazur zeta function (see \cite{Rob}).

We fix an irreducible polynomial $f(t)$ of degree $n$. This determines an algebraic number field $K=\Q[t]/(f(t))$. The class of $t$, which we denote from now on by $\beta$, is a simple root of $f$ in $K$ and we identify $K$ with the isomorphic field $\Q(\beta)$. The notation $O_{K}$ denotes the ring of all algebraic integers of $K$. An automorphism $A \in GL_{n}(\Z)$ with characteristic polynomial $f(t)$ acts on $K^{n}$ and so there exists a right eigenvector $u \in K^{n}$ associated to $\beta$, namely, $A u=\beta u$, where the entries $u_{i}$ of $u$ determine a $\Z[\beta]$-fractional ideal $I=u_1\Z + \cdots + u_n\Z$. The action of $A$, on the right, on $\Z^{n}$, is thus identified with multiplication by $\beta$ in $I$, namely, if $x= m \cdot u $ for some $m \in \Z^{n}$, then $m A u=\beta x$. The fact that the entries of $u$ are a basis of the free $\Z$-module $I$ has the obvious but important consequence that if $m \cdot u=0$ then $m=0$, and so, if $M \in M_n(\Z)$ (an $n\times n$ integer matrix) and $M u=0$ then $M=0$. 

\begin{definition}Two fractional ideals $I$ and $J$ are arithmetically equivalent if $J=\alpha I$ for some $\alpha \in K$.
 \end{definition}
 
Each $A \in GL_{n}(\Z)$ is in fact associated with a class of ideals. On the other hand, if $A$ and $B$ are conjugate they are associated with the same ideal class, the conjugating matrix $P$ acting as a change of basis of the ideals, taken as $\Z$-modules. This is the the essence of the following result found in \cite{Ta1,Ta}.

\begin{theorem}[Latimer-MacDuffee-Taussky]
There is a bijection between the conjugacy classes of $GL_{n}(\Z)$ matrices with irreducible characteristic polynomial $f(t)$ and arithmetic equivalence classes of ideals of $\Z[\beta]$.
\end{theorem}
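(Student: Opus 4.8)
The plan is to exhibit mutually inverse maps between the two collections of classes. In the direction from matrices to ideals, most of the construction is already carried out in the paragraph preceding the statement: to $A\in GL_n(\Z)$ with characteristic polynomial $f$ one attaches the $\Z[\beta]$-fractional ideal $I=u_1\Z+\cdots+u_n\Z$ generated by the entries of a right eigenvector $u$ with $Au=\beta u$. For the reverse map, given a $\Z[\beta]$-fractional ideal $I$ I would choose a $\Z$-basis $u_1,\dots,u_n$ of $I$; since $\beta I\subseteq I$ there is a unique $A\in M_n(\Z)$ with $\beta u_i=\sum_j a_{ij}u_j$, equivalently $Au=\beta u$ with $u=(u_1,\dots,u_n)^{\mathrm t}$, and I would attach to $I$ the conjugacy class of $A$.

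Two structural facts make everything run. First, $f$ is irreducible over $\Q$, hence separable, so $\beta$ is a \emph{simple} root of $f$ and therefore a simple eigenvalue of $A$ acting on $K^n$; thus the $\beta$-eigenspace is one-dimensional over $K$ and the eigenvector $u$ is unique up to a factor in $K^\times$. Second, for any nonzero eigenvector $u$, the $\Q$-span of $u_1,\dots,u_n$ is a nonzero subspace of $K$ stable under multiplication by $\beta$, hence is all of $K$; since there are $n=[K:\Q]$ of the $u_i$, they form a $\Q$-basis of $K$, so $I$ is a free $\Z$-module of rank $n$ spanning $K$ — a genuine $\Z[\beta]$-fractional ideal — and the implication $Mu=0\Rightarrow M=0$ for $M\in M_n(\Z)$ recorded above holds. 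On the ideal side, the matrix $A$ recording multiplication by $\beta$ has characteristic polynomial monic of degree $n$ with $\beta$ as a root, hence equal to the minimal polynomial $f$ of $\beta$, and it lies in $GL_n(\Z)$ because $\det A=(-1)^n f(0)=\pm1$, the constant term $f(0)$ being $\pm1$ since $f$ occurs as a characteristic polynomial of a matrix of $GL_n(\Z)$ (equivalently, $\beta$ is a unit of $\Z[\beta]$).

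Granting these, the remaining checks are bookkeeping. The matrix-to-ideal assignment descends to conjugacy classes: if $PA=BP$ with $P\in GL_n(\Z)$ and $Au=\beta u$, then $B(Pu)=\beta(Pu)$, and the entries of $Pu$ generate the same $\Z$-module as those of $u$ because $P$ is invertible over $\Z$; replacing $u$ by $\alpha u$ replaces $I$ by $\alpha I$, so the arithmetic equivalence class is unchanged. The ideal-to-matrix assignment descends to arithmetic equivalence classes: a change of $\Z$-basis of $I$ conjugates $A$ by an element of $GL_n(\Z)$, and passing from $I$ to $\alpha I$ (with basis $\alpha u_1,\dots,\alpha u_n$) leaves $A$ literally unchanged since $\beta(\alpha u_i)=\sum_j a_{ij}(\alpha u_j)$. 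Finally the two maps are mutually inverse: starting from an ideal, choosing a basis, forming $A$, and then taking the $\beta$-eigenvector returns the same $u$ and hence the same $I$; starting from $A$, forming $I$ with the basis $u_1,\dots,u_n$, and then recording multiplication by $\beta$ yields a matrix $A'$ with $A'u=\beta u=Au$, so $(A'-A)u=0$ and therefore $A'=A$.

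The only non-formal point — the \emph{main obstacle} — is securing the two structural facts: that the entries of an eigenvector always form a full-rank lattice which is a $\Z[\beta]$-fractional ideal, and, on the ideal side, that multiplication by $\beta$ lands in $GL_n(\Z)$ rather than merely in $M_n(\Z)$. Once these are in place, the correspondence and its bijectivity reduce to the change-of-basis manipulations above.
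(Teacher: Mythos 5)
Your proof is correct: it is the classical Latimer--MacDuffee--Taussky correspondence, built on exactly the machinery the paper sets up just before the statement (the ideal $I=u_1\Z+\cdots+u_n\Z$ attached to an eigenvector $u$ with $Au=\beta u$, the fact that the entries of $u$ form a $\Q$-basis of $K$, and the consequence that $Mu=0$ forces $M=0$); the paper itself offers no proof, citing Taussky instead, and your argument fills that gap along the standard route. The only hypothesis worth keeping visible is the one you already flag, namely $f(0)=\pm1$ (equivalently $\beta$ a unit of $\Z[\beta]$), which is automatic in this paper's setting since $f$ is realized as the characteristic polynomial of an element of $GL_n(\Z)$.
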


For two fractional ideals $I$ and $J$ of $\Z[\beta]$, we define another fractional ideal
\[(J:I)=\{x \in K: xI \subset J\}.\]
The coefficient ring of $I$ is defined to be $(I:I)$, which is always an order of the field, i.e., a subring of $O_{K}$ that has rank $n$ as a $\Z$-module. Obviously $(I:I)\supset \Z[\beta]$. Suppose that $A \in GL_{n}(\Z)$ represents, as above, multiplication by
$\beta$ in $I$, with respect to a certain basis over $\Z$; then, if $\theta \in K$ is given by a polynomial $\theta=p(\beta)$ with rational coefficients, then $\theta \in (I:I)$ iff $p(A)$ is an integer matrix. An obvious necessary condition for two ideals to be arithmetically equivalent is that they have the same coefficient ring.
An ideal $X$ with coefficient ring $R$ is invertible if $X(R:X)=R$. An invertible ideal may always be generated, as an $R$-module, by two (or one) elements.

\begin{definition} Two ideals $I$ and $J$ of $\Z[\beta]$ are weakly equivalent if there exist ideals $X$ and $Y$ and an order $R$ such that
\[ I X= J,\ J Y= I,\ X Y=R.\]
 \end{definition}
 
The definition of weak equivalence implies that $I$ and $J$ have the same coefficient ring $R$. A detailed exposition on the weak equivalence relation may be found in \cite{DTZ}. A few facts are immediate consequences of the definition: arithmetically equivalent ideals are always weak equivalent, so the weak equivalence relation contains arithmetic equivalence. On the other hand, two invertible but not arithmetically equivalent ideals, in an order $R$, are always weakly equivalent. Furthermore, an invertible ideal and a non-invertible ideal, both in the same order $R$, are not weakly equivalent. 

We recall an example found in \cite{DTZ} that gives by a general construction, the existence of non-invertible ideals. Let $\theta$ be any algebraic integer of degree $n>2$, and set $R_{0}$ to be the monogenic order $\Z[\theta]=\Z+\Z \theta+\Z \theta^{2}+\cdots+\Z \theta^{n-1}$. The subring
\[I=\Z+\Z \theta+2\Z \theta^{2}+\cdots+2\Z \theta^{n-1}\]
is a fractional ideal for the suborder $R=\Z+2\Z \theta+2\Z \theta^{2}+\cdots+2\Z \theta^{n-1}$ of $R_{0}$. Here we have $R=(I:I)$ because $1\in I$ implies $(I:I) \subset I$, and because $\theta \notin (I:I)$. On the other hand, for any $0<k<n-1$, we have
\[I^{k}=\Z+\Z \theta+\cdots +\Z\theta^{k}+2\Z \theta^{k+1}+\cdots+2\Z \theta^{n-1}\]
while $I^{n-1}=R_{0}$. Since $I R_{0}=I^{2}R_{0}= \cdots=I^{n-2}R_{0}=R_{0}$, we conclude that $I$ (as well as for each power $I^{k}$ for $0<k<n-1$) is not invertible, because if $I J=R$ then we would have
\[I R_{0}=R_{0} \implies I J R_{0}=J R_{0} \implies R R_{0}=J R_{0} \implies R_{0}=J R_{0}\implies\]
\[\implies R_{0}=I^{n-1}J=I^{n-2}R=I^{n-2},\]
a contradiction.

\section{Semiconjugacy and Block Conjugacy}

Semiconjugacy by continuous surjection fails to distinguish two toral automorphisms $A$ and $B$ on ${\mathbb T}^n$ with the same irreducible characteristic polynomial $f(t)$. The components of right eigenvectors $u$ of $A$ and $v$ of $B$ corresponding to $\beta$ both form bases of $K$ as a $\Q$-vector space. So there is $P\in GL_n(\Q)$ such that $u = Pv$. Then $(AP-PB)v = Au-\beta u=0$, so that $AP=PB$. Hence there are integer matrices $X$ and $Y$ given by appropriate integer multiples of $P$ of $P^{-1}$ respectively, that give continuous surjections of ${\mathbb T}^n$ that satisfy $AX=XB$ and $BY = YA$. These state that $B$ is semiconjugate to $A$ by the continuous surjection $X$, and that $A$ is semiconjugate to $B$ by the continuous surjection $Y$.

For an integer $k\geq 2$, semiconjugacy by continuous surjection also fails to distinguish $\bigoplus_{i=1}^k B$ from $A$, and $\bigoplus_{i=1}^k A$ from $B$. Here $\bigoplus_{i=1}^k B$ is the diagonal action of $B$ on $({\mathbb T}^n)^k$, i.e.,
\[ \bigoplus_{i=1}^k B = \begin{pmatrix} B & 0 & 0 & \dots & 0 \\ 0 & B & 0 & \dots & 0 \\ 0 & 0 & B & \dots & 0 \\ \vdots & \vdots & \vdots & \ddots & \vdots \\ 0 & 0 & 0 & \dots & B\end{pmatrix}.\]
With $\Pi_1$ as the projection of $({\mathbb T}^n)^k$ to the first factor, the continuous surjection $X\circ \Pi_1$ from $({\mathbb T}^n)^k$ to ${\mathbb T}^n$ satisfies $X\circ \Pi_1\circ \bigoplus_{i=1}^k B = A\circ X\circ \Pi_1$. Thus $\bigoplus_{i=1}^k B$ is semiconjugate to $A$ by the continuous surjection $X\circ \Pi_1$. Similarly we also have $Y\circ \Pi_1 \circ \bigoplus_{i=1}^k A = B\circ Y\circ \Pi_1$.

However, semiconjugacy by continuous injection may distinguish $A$ from $\bigoplus_{i=1}^k B$, and/or $B$ from $\bigoplus_{i=1}^k A$. The existence of a continuous injection $\iota:{\mathbb T}^n \to ({\mathbb T}^n)^k$ which satisfies $\iota \circ A = \bigoplus_{i=1}^k B \circ \iota$ implies that the continuously embedded copy of ${\mathbb T}^n$ given by $\iota({\mathbb T}^n)$ is an invariant set for $\bigoplus_{i=1}^k B$, and that $\bigoplus_{i=1}^k B$ induces $A$ on $\iota( {\mathbb T}^n)$. Similar statements hold for a semiconjugacy from $B$ to $\bigoplus_{i=1}^k A$ by a continuous injection.

More can be said when the continuous injection $\iota$, in $\iota \circ A = \bigoplus_{i=1}^k B\circ \iota$, is a linear embedding. In this case, $\iota$ is given by a $nk \times n$ integer matrix of the block form
\[ \begin{pmatrix} M_1 \\ M_3\end{pmatrix}\]
where $M_1$ is $n\times n$ and $M_3$ is $n(k-1)\times n$, for which the greatest common divisor of the $nk$  integers in each column of the $nk\times n$ matrix is $1$. The semiconjugacy $\iota\circ A = \bigoplus_{i=1}^k B \circ \iota$ becomes
\[ \begin{pmatrix} M_1 \\ M_3\end{pmatrix} A = \bigoplus_{i=1}^k B \circ \begin{pmatrix} M_1 \\ M_3\end{pmatrix}.\]
By the Hermite Normal Form \cite{Coh}, the linear embedding can be extended to a $GL_{nk}(\Z)$ matrix
\[ M = \begin{pmatrix} M_1 & M_2 \\ M_3 & M_4\end{pmatrix}.\]
Setting
\[ \begin{pmatrix} S \\ A^\prime\end{pmatrix} = M^{-1}\circ \bigoplus_{i=1}^k B \circ \begin{pmatrix} M_2 \\ M_4\end{pmatrix} \]
gives the equation
\[ \bigoplus_{i=1}^k B \circ M = M \begin{pmatrix} A & S \\ 0 & A^\prime \end{pmatrix}.\]
This implies that 
\[ \hat A = \begin{pmatrix} A & S \\ 0 & A^\prime \end{pmatrix} \in GL_{nk}(\Z),\]
and hence there is a conjugacy between $\bigoplus_{i=1}^k B$ and $\hat A$. On the other hand, if there is $M\in GL_{nk}(\Z)$ and a $GL_{nk}(\Z)$ matrix of the form $\hat A$ that satisfy
\[ \bigoplus_{i=1}^k B \circ M = M \hat A,\]
then $A$ is semiconjugate to $\bigoplus_{i=1}^k B$ by the linear embedding $\iota = M\circ \iota_1$, where $\iota_1 : {\mathbb T}^n \to ({\mathbb T}^n)^k$ is defined by $\theta \to (\theta,0,\dots,0)$; that is
\[  M\circ i_1 \circ A = \bigoplus_{i=1}^k B \circ M \circ i_1.\]
Thus we have proved

\begin{proposition}\label{nkconjugacy} The existence of a semiconjugacy from $A$ to $\bigoplus_{i=1}^k B$ by a linear embedding is equivalent to the existence of a conjugacy between a $GL_{nk}(\Z)$ matrix of the form
\[ \hat A = \begin{pmatrix} A & S \\ 0  & A^\prime\end{pmatrix}\]
and $\bigoplus_{i=1}^k B$.
\end{proposition}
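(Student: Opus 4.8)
The plan is to prove the two implications separately, and the delicate point --- which I would settle first --- is the correspondence between linear embeddings of tori and integer matrices. A homomorphism $\T^n\to(\T^n)^k$ induced by an $nk\times n$ integer matrix $N$ is a topological embedding if and only if it is injective; since $N$ must then have rank $n$, injectivity is in turn equivalent to $\Z^{nk}/N\Z^n$ being torsion-free, that is, to the columns of $N$ forming part of a $\Z$-basis of $\Z^{nk}$. By the Hermite normal form this is exactly the condition that $N$ can be completed to a matrix $M\in GL_{nk}(\Z)$ whose first $n$ columns are $N$. (Primitivity of each column of $N$ is necessary for this, but on its own not sufficient, so one does have to invoke injectivity of the embedding at this point.)

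For the forward implication, I would start with a linear embedding $\iota$ satisfying $\iota\circ A=\bigoplus_{i=1}^k B\circ\iota$, write its matrix as $N=\left(\begin{smallmatrix}M_1\\M_3\end{smallmatrix}\right)$, and extend $N$ to $M\in GL_{nk}(\Z)$ as above, so that $M^{-1}N=\left(\begin{smallmatrix}I_n\\0\end{smallmatrix}\right)$. At the level of integer matrices the semiconjugacy reads $\bigl(\bigoplus_{i=1}^k B\bigr)N=NA$. Consequently the first $n$ columns of $M^{-1}\bigl(\bigoplus_{i=1}^k B\bigr)M$ equal $M^{-1}\bigl(\bigoplus_{i=1}^k B\bigr)N=M^{-1}NA=\left(\begin{smallmatrix}A\\0\end{smallmatrix}\right)$, so $M^{-1}\bigl(\bigoplus_{i=1}^k B\bigr)M$ has the block form $\hat A=\left(\begin{smallmatrix}A&S\\0&A'\end{smallmatrix}\right)$ for some integer matrices $S$ of size $n\times n(k-1)$ and $A'$ of size $n(k-1)\times n(k-1)$. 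Being conjugate in $GL_{nk}(\Z)$ to $\bigoplus_{i=1}^k B$, the matrix $\hat A$ lies in $GL_{nk}(\Z)$ (and since $\det A=\pm1$ this forces $A'\in GL_{n(k-1)}(\Z)$), and $M$ is the desired conjugacy between $\hat A$ and $\bigoplus_{i=1}^k B$ of the required shape.

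For the converse, suppose $M\in GL_{nk}(\Z)$ and a matrix $\hat A=\left(\begin{smallmatrix}A&S\\0&A'\end{smallmatrix}\right)\in GL_{nk}(\Z)$ satisfy $\bigoplus_{i=1}^k B\circ M=M\hat A$. Let $\iota_1:\T^n\to(\T^n)^k$ be the linear embedding $\theta\mapsto(\theta,0,\dots,0)$, with matrix $\left(\begin{smallmatrix}I_n\\0\end{smallmatrix}\right)$; the block shape of $\hat A$ gives $\hat A\left(\begin{smallmatrix}I_n\\0\end{smallmatrix}\right)=\left(\begin{smallmatrix}A\\0\end{smallmatrix}\right)=\left(\begin{smallmatrix}I_n\\0\end{smallmatrix}\right)A$, i.e. $\hat A\circ\iota_1=\iota_1\circ A$. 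Setting $\iota=M\circ\iota_1$ --- the composite of the embedding $\iota_1$ with the automorphism of $(\T^n)^k$ given by $M$, hence again a linear embedding --- one computes
\[ \bigoplus_{i=1}^k B\circ\iota=\bigl(\bigoplus_{i=1}^k B\circ M\bigr)\circ\iota_1=M\circ\hat A\circ\iota_1=M\circ\iota_1\circ A=\iota\circ A, \]
which is precisely a semiconjugacy from $A$ to $\bigoplus_{i=1}^k B$ by the linear embedding $\iota$.

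I expect the main obstacle to be the first step: making sure that ``linear embedding'' really is equivalent to $N$ being completable to a unimodular matrix, so that the Hermite normal form extension is legitimate, rather than settling for the weaker statement that each column of $N$ is primitive. After that the argument is just the two block computations above, together with the routine observations that block-triangularity keeps $S$ integral and that conjugacy and determinants place $\hat A$ and $A'$ in the correct general linear groups.
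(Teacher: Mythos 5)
Your proof follows essentially the same route as the paper: write the embedding as an $nk\times n$ integer matrix, complete it to $M\in GL_{nk}(\Z)$ via the Hermite normal form so that $M^{-1}\bigl(\bigoplus_{i=1}^k B\bigr)M$ has the block upper-triangular form $\hat A$, and for the converse take $\iota = M\circ\iota_1$ with $\iota_1(\theta)=(\theta,0,\dots,0)$. Your extra care on the completability criterion (torsion-free cokernel, rather than mere primitivity of each column, which the paper states only loosely) is a correct refinement but does not change the argument.
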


A similar equivalence holds when $A$ and $B$ are interchanged.

The simplest structure of the $GL_{nk}(\Z)$ matrix of the form $\hat A$ occurs when $S=0$ and $A^\prime$ is a block diagonal matrix. Specifically, $A^\prime = \bigoplus_{i=2}^k A_i$ with each $A_i$ belonging to $GL_n(\Z)$. The conjugacy of Proposition \ref{nkconjugacy} then relates the diagonal action of $B$ with another diagonal action $\bigoplus_{i=1}^k A_i$ where $A_1=A$. A similar statement holds for a conjugacy between the diagonal action of $A$ with another diagonal action $\bigoplus_{i=1}^k B_i$ where $B_1=B$ and $B_2,\dots,B_k$ belong to $GL_n(\Z)$.

\begin{definition} For an integer $k\geq 2$, two $GL_n(\Z)$ matrices $A$ and $B$ with the same irreducible characteristic polynomial $f(t)$ are $k$-block conjugate if there exist matrices $A_2,\dots,A_k, B_2,\dots,B_k\in GL_n(\Z)$ such that $\bigoplus_{i=1}^k A \approx \bigoplus_{i=1}^k B_i$, and $\bigoplus_{i=1}^k B \approx \bigoplus_{i=1}^k A_i$. We say $A$ and $B$ are block conjugate if they are $k$-block conjugate for some $k\geq 2$.
\end{definition}

The relation of $k$-block conjugacy of $A$ and $B$ has several properties. It is reflexive. It is also symmetric, but a $GL_{nk}(\Z)$ matrix that conjugates $\bigoplus_{i=1}^k A$ with $\bigoplus_{i=1}^k B_i$ need not be the inverse of a $GL_{nk}(\Z)$ matrix that conjugates $\bigoplus_{i=1}^k B$ with $\bigoplus_{i=1}^k A_i$. (We will show in the next section that the existence of one of the two conjugacies implies the other.) Furthermore, the irreducibility of characteristic polynomial $f(t)$ of $A$ and $B$ implies that the characteristic polynomial of every one of $A_2,\dots,A_k$ and of $B_2,\dots,B_k$ is also $f(t)$. We will shown that $2$-block conjugacy is transitive, and hence an equivalence relation. 

The relation of $k$-block conjugacy is a generalization of conjugacy, which may be seen as $1$-block conjugacy, that presents an enlarged stage for distinguishing and comparing the dynamics  of $A$ and $B$. It has a simple interpretation: if $A$ and $B$ are $k$-block conjugate, for some $k$, then $A$ is conjugated to the restriction of $\oplus_{i=1}^{k} B$ to an embedded $n$-dimensional torus in $\mathbb{T}^{k n}$ and vice-versa. As a special case, it may even happen that, for some $k>1$, $\bigoplus_{i=1}^{k} A \approx \bigoplus_{i=1}^{k} B$ with $A$ and $B$ nonconjugate. We will show in the next section that there exist nonconjugate $A$ and $B$ for which $A\oplus A\approx B\oplus B$.

The relation of $k$-block conjugacy has the following property.

\begin{proposition}\label{kimpliesk+1} If $A$ and $B$ are $k$-block conjugate for some $k\geq 1$, then $A$ and $B$ are $(k+1)$-block conjugate.
\end{proposition}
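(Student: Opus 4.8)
The plan is to take the defining conjugacies for $k$-block conjugacy and "pad" them with one extra block so that they become conjugacies witnessing $(k+1)$-block conjugacy. Suppose $A$ and $B$ are $k$-block conjugate, so there exist $A_2,\dots,A_k,B_2,\dots,B_k\in GL_n(\Z)$ with $\bigoplus_{i=1}^k A\approx\bigoplus_{i=1}^k B_i$ and $\bigoplus_{i=1}^k B\approx\bigoplus_{i=1}^k A_i$ (writing $A_1=A$, $B_1=B$). Let $P,Q\in GL_{nk}(\Z)$ be the conjugating matrices, i.e. $P\bigl(\bigoplus_{i=1}^k A\bigr)=\bigl(\bigoplus_{i=1}^k B_i\bigr)P$ and similarly for $Q$. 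The case $k=1$ is slightly special but harmless: $1$-block conjugacy is ordinary conjugacy $A\approx B$, and below we simply set $A_2=A$, $B_2=B$ to start the induction.

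Next I would define the extra blocks. Set $A_{k+1}=B$ and $B_{k+1}=A$. With these choices we must exhibit $\bigoplus_{i=1}^{k+1}A\approx\bigoplus_{i=1}^{k+1}B_i$ and $\bigoplus_{i=1}^{k+1}B\approx\bigoplus_{i=1}^{k+1}A_i$. For the first: $\bigoplus_{i=1}^{k+1}A=\bigl(\bigoplus_{i=1}^k A\bigr)\oplus A$, and $\bigoplus_{i=1}^{k+1}B_i=\bigl(\bigoplus_{i=1}^k B_i\bigr)\oplus B_{k+1}=\bigl(\bigoplus_{i=1}^k B_i\bigr)\oplus A$. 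Conjugating the first $k$ blocks by $P$ and the last block by the identity (equivalently, conjugating by $P\oplus I_n\in GL_{n(k+1)}(\Z)$) gives $\bigoplus_{i=1}^{k+1}A\approx\bigoplus_{i=1}^{k+1}B_i$. Symmetrically, conjugating by $Q\oplus I_n$ gives $\bigoplus_{i=1}^{k+1}B=\bigl(\bigoplus_{i=1}^k B\bigr)\oplus B\approx\bigl(\bigoplus_{i=1}^k A_i\bigr)\oplus A=\bigl(\bigoplus_{i=1}^k A_i\bigr)\oplus B_{k+1}'$ — wait, we need the last block on the right-hand side to be $A_{k+1}=B$, which it is, since the $(k+1)$-st summand of $\bigoplus_{i=1}^{k+1}A_i$ is $A_{k+1}=B$, matching the extra $B$ summand on the left. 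So $\bigoplus_{i=1}^{k+1}B\approx\bigoplus_{i=1}^{k+1}A_i$ as required.

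Finally I would check the side conditions built into the definition of $k$-block conjugacy: the matrices $A_2,\dots,A_{k+1},B_2,\dots,B_{k+1}$ must lie in $GL_n(\Z)$ and have the same irreducible characteristic polynomial $f(t)$ as $A$ and $B$. For the inherited blocks $A_2,\dots,A_k,B_2,\dots,B_k$ this holds by hypothesis, and for the new blocks $A_{k+1}=B$ and $B_{k+1}=A$ it holds trivially. The characteristic polynomial condition is automatic once the conjugacies hold, since $\det(tI-\bigoplus_{i=1}^{k+1}A)=f(t)^{k+1}$ forces each of the $B_i$ (resp. $A_i$) to have characteristic polynomial $f(t)$ by unique factorization in $\Q[t]$ and irreducibility of $f$ — this is the same argument already invoked in the paragraph preceding the proposition. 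There is essentially no obstacle here: the only point requiring a moment's care is the bookkeeping of which matrix plays the role of the $(k+1)$-st block on each side, and making sure the "extra" summands $A$ on the left of the first conjugacy and $B$ on the right match $B_{k+1}=A$, while the "extra" $B$ on the left of the second conjugacy matches $A_{k+1}=B$. One could alternatively avoid introducing $B$ and $A$ as new blocks by instead padding both sides with a common extra summand equal to $A$ (resp. $B$), setting $A_{k+1}=A$ and $B_{k+1}=B$; I would present whichever choice reads more cleanly, but the argument is identical in structure.
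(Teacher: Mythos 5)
Your main argument is correct and is essentially the paper's proof: set $A_{k+1}=B$ and $B_{k+1}=A$, and pad each conjugating matrix with an identity block, i.e.\ conjugate by $P\oplus I_n$ and $Q\oplus I_n$, exactly as the paper does with $\tilde M=\left(\begin{smallmatrix} M & 0 \\ 0 & I_n\end{smallmatrix}\right)$. One caveat about your closing aside: the alternative choice $A_{k+1}=A$, $B_{k+1}=B$ is \emph{not} ``identical in structure'' for $k\geq 2$, since the extra summand on the left of $\bigoplus_{i=1}^{k+1}B\approx\bigoplus_{i=1}^{k+1}A_i$ is forced to be $B$, which need not be conjugate to $A_{k+1}=A$, so the block-diagonal padding fails there (it only works in the $k=1$ case, where $A\approx B$ is the hypothesis); stick with the choice you actually carried out.
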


\begin{proof} Suppose there is $M\in GL_{nk}(\Z)$ such that $\bigoplus_{i=1}^k B \circ M = M\circ \bigoplus_{i=1}^k A_i$. Set $A_{k+1} = B$, and
\[ \tilde M = \begin{pmatrix} M & 0 \\ 0 & I_n\end{pmatrix} \in GL_{n(k+1)}(\Z),\]
where $I_n$ is the $n\times n$ identity matrix. It follows that $ \bigoplus_{i=1}^{k+1} B \circ \tilde M = \tilde M \circ \bigoplus_{i=1}^{k+1} A_i$. A similar extension of $\bigoplus_{i=1}^k A \approx \bigoplus_{i=1}^k B_i$ to $k+1$ holds as well.
\end{proof}

We could have two automorphisms which are $k$-block conjugate for some $k\geq 3$, but not $l$-block conjugate for all $l=1,\dots,k-1$. But this can not happen, because we will prove in the next section the remarkable property that if $A$ and $B$ are $k$-block conjugate for some $k\geq 3$, then they are $2$-block conjugate. This combined with Proposition \ref{kimpliesk+1} implies there are then only three possibilities for two automorphism: conjugate, $2$-block conjugate but not conjugate, or not block conjugate.

The existence of a $k$-block conjugacy for $k\geq 2$ imposes necessary conditions on the conjugating matrices. Suppose a $GL_{nk}(\Z)$ matrix $M$ satisfies $\bigoplus_{i=1}^k B \circ M = M \circ \bigoplus_{i=1}^k A_i$. If $X$ is an $n\times n$ matrix in the partition of $M$ into $n\times n$ submatrices, then $\bigoplus_{i=1}^k B \circ M = M \circ\bigoplus_{i=1}^k A_i$ implies that $X$ satisfies $BX = XA_i$ for some $i$. This says that $A_i$ is semiconjugate to $B$ by the endomorphism $X$. Thus $M$ is built with $n\times n$ matrices belonging to the sets
\[\Lambda(B,A_i)= \{X \in M_n(\Z): BX = X A_i \},\ i=1,\dots,k.\]
Similarly, a $GL_{nk}(\Z)$ matrix $N$ that satisfies $\bigoplus_{i=1}^k A \circ N = N \circ \bigoplus_{i=1}^k B_i$, is built from $n\times n$ matrices belonging to the sets $\Lambda(A,B_i)$, $i=1,\dots,k$.

Each of the sets $\Lambda(B,A_i)$, $\Lambda(A,B_i)$ is an additive abelian group that has a module structure. We will describe the module structure for $\Lambda(A,B)$, as the others are similar. Associated to $A$ and $B$ are the $\Z[\beta]$-fractional ideals $I=\bigoplus_{i} u_{i}\Z$ and  $J=\bigoplus_{i} v_{i}\Z$ generated respectively by the entries of their right eigenvectors $u$ and $v$ corresponding to $\beta$. The following then follows directly from the definition of $\Lambda(A,B)$.

\begin{proposition}\label{Rmodule} For the ring $R= (I:I)\cap (J:J)$, the abelian group $\Lambda(A,B)$ is a left and a right $R$-module with multiplications defined by
\[R \times \Lambda(A,B) \rightarrow \Lambda(A,B),  \hspace{0.5 cm} (p(\beta),X) \rightarrow p(A) X,\] 
\[R \times \Lambda(A,B)  \rightarrow \Lambda(A,B), \hspace{0.5 cm} (p(\beta),X) \rightarrow X p(B),\]
where $p(x)\in \Q[x]$ satisfies $p(\beta)\in R$. Moreover, there is an isomorphism of $R$-modules
 \[\varphi:\Lambda(A,B) \rightarrow (J:I)=\{\theta \in K: \theta I \subset J\}\]
given by \[\varphi(X)=\theta \, \mbox{  where }  X v = \theta u.\]
\end{proposition}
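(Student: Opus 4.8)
The plan is to construct the $R$-module isomorphism $\varphi$ directly, check it is an additive bijection, and only then sort out the module structures. Throughout I will use three elementary facts. First, since $f$ is irreducible it is the minimal polynomial of both $A$ and $B$, so each of $A$ and $B$ is similar over $\Q$ to the companion matrix of $f$ and, $f$ being separable, has a one-dimensional $\beta$-eigenspace over $K$, spanned by $u$ and by $v$ respectively. Second, as recalled in Section~2, $u_1,\dots,u_n$ is a $\Z$-basis of $I$ and $v_1,\dots,v_n$ is a $\Z$-basis of $J$; in particular each family is a $\Q$-basis of $K$, so the only integer matrix $M$ with $Mu=0$, or with $Mv=0$, is $M=0$. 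Third, for $p(x)\in\Q[x]$ one has $p(\beta)\in(I:I)$ iff $p(A)\in M_n(\Z)$ and $p(\beta)\in(J:J)$ iff $p(B)\in M_n(\Z)$. Now, given $X\in\Lambda(A,B)$, that is $AX=XB$, the computation $A(Xv)=(AX)v=(XB)v=X(\beta v)=\beta(Xv)$ shows $Xv$ lies in the $\beta$-eigenspace of $A$; by the first fact there is a unique $\theta\in K$ with $Xv=\theta u$, and I set $\varphi(X)=\theta$. Reading the $j$th coordinate of $Xv=\theta u$ gives $\theta u_j=\sum_k X_{jk}v_k\in J$, whence $\theta I\subseteq J$, i.e.\ $\theta\in(J:I)$. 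Additivity of $\varphi$ is immediate, and $\varphi(X)=0$ forces $Xv=0$, hence $X=0$, so $\varphi$ is injective.

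For surjectivity, let $\theta\in(J:I)$. Each $\theta u_j$ lies in $J$, so by the second fact it has a unique expansion $\theta u_j=\sum_k X_{jk}v_k$ with $X_{jk}\in\Z$; these entries define a matrix $X\in M_n(\Z)$ with $Xv=\theta u$. To see $X\in\Lambda(A,B)$, I use $Au=\beta u$ and $Bv=\beta v$:
\[ (AX)v=A(\theta u)=\theta\beta u=\beta(Xv)=X(\beta v)=(XB)v, \]
so $(AX-XB)v=0$, and the second fact gives $AX=XB$. Hence $\varphi(X)=\theta$, and $\varphi$ is an isomorphism of abelian groups.

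It remains to handle the module structures. The ring $R=(I:I)\cap(J:J)$ is an order of $K$: it is a subring of $O_K$ containing $\Z[\beta]$, hence of $\Z$-rank $n$. For $p(\beta)\in R$ the third fact gives $p(A),p(B)\in M_n(\Z)$, so $p(A)X$ and $Xp(B)$ are integer matrices, and they lie in $\Lambda(A,B)$ because $p(A)$ commutes with $A$, $p(B)$ commutes with $B$, and $AX=XB$. The module axioms reduce to two facts: that evaluation $p(\beta)\mapsto p(A)$ (respectively $p(\beta)\mapsto p(B)$) is a well-defined ring homomorphism on $R$ (well-defined because $f$ annihilates $A$ and $B$), and that matrix multiplication is associative, which in addition makes the left action by $p(A)$ and the right action by $p(B)$ commute. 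On the other side, $(J:I)$ is an $R$-module via $r\cdot\theta=r\theta$, since $\theta I\subseteq J$ and $r\in R$ imply $r\theta I\subseteq rJ\subseteq J$. Finally, $\varphi$ is $R$-linear for both structures: from $Au=\beta u$ and $Bv=\beta v$ one gets $(p(A)X)v=p(A)(\theta u)=\theta p(\beta)u$ and $(Xp(B))v=X(p(\beta)v)=p(\beta)(\theta u)$, so $\varphi(p(A)X)=p(\beta)\theta=\varphi(Xp(B))$; thus $\varphi$ is an isomorphism of left and of right $R$-modules, and in particular the two $R$-module structures on $\Lambda(A,B)$ agree under $\varphi$.

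\textbf{Main obstacle.} I do not expect a serious difficulty. The only places where the hypotheses are really used are the well-definedness of $\theta$, which needs the $\beta$-eigenspace of $A$ to be one-dimensional (this is exactly where irreducibility of $f$ enters), and, dually, the integrality of the matrix $X$ produced in the surjectivity argument, which uses that $v_1,\dots,v_n$ is a $\Z$-basis of $J$ rather than merely a generating set. Everything else is routine manipulation of the identities $Au=\beta u$, $Bv=\beta v$, and $AX=XB$.
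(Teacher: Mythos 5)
Your proof is correct, and it is precisely the routine verification that the paper leaves implicit (the paper simply asserts the proposition "follows directly from the definition of $\Lambda(A,B)$"): the key points — uniqueness of $\theta$ from the one-dimensional $\beta$-eigenspace, integrality of $X$ from the $\Z$-basis property of $v$, the criterion $p(\beta)\in(I:I)$ iff $p(A)\in M_n(\Z)$, and the computations showing $\varphi$ intertwines both actions with multiplication by $p(\beta)$ — are exactly the intended argument. No discrepancy with the paper; your write-up just supplies the details.
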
 

We show that each endomorphism in $\Lambda(A,B)$ is either the $0$ matrix or surjective. Recall that there is $P\in GL_n(\Q)$ such that $u=Pv$. Combining this with $Xv = \theta u$ gives $P^{-1}Xu = \theta u$. The determinant of $P^{-1}X$ is the absolute norm of $\theta \in K$, which determinant is equal to zero if and only if $\theta = 0$. This implies

\begin{corollary}\label{detnot0}
 If $X \in \Lambda(A,B)$ then $\det X=0$ if and only if $X=0$.
\end{corollary}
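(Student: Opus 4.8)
The plan is to prove the only substantive implication, namely that $X\in\Lambda(A,B)$ with $X\neq 0$ forces $\det X\neq 0$; the reverse implication $X=0\Rightarrow\det X=0$ is trivial. So I would take a nonzero $X\in\Lambda(A,B)$ and invoke the $R$-module isomorphism $\varphi$ of Proposition~\ref{Rmodule}: since $\varphi$ is injective, $\theta:=\varphi(X)$ is a nonzero element of $K$, and by the very definition of $\varphi$ one has $Xv=\theta u$. Using the matrix $P\in GL_n(\Q)$ with $u=Pv$ recorded just before Proposition~\ref{Rmodule}, this rewrites as $Qu=\theta u$ for a rational matrix $Q$ built from $X$ and $P^{\pm1}$, whose determinant differs from $\det X$ by the nonzero factor $\det P$. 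Hence the whole matter reduces to showing $\det Q\neq 0$.

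The key step is to recognize $\det Q$ as the norm $N_{K/\Q}(\theta)$. For this I would use that the entries $u_{1},\dots,u_{n}$ of $u$ form a $\Q$-basis of $K$: writing the $\Q$-linear map $x\mapsto\theta x$ of $K$ in this basis produces a rational matrix $C$ with $Cu=\theta u$ and $\det C=N_{K/\Q}(\theta)$. Then $(Q-C)u=0$ with $Q-C$ rational, and since the $u_{i}$ are $\Q$-linearly independent (the same observation recorded in Section~2, that a matrix killing $u$ must vanish), we conclude $Q=C$, so $\det Q=N_{K/\Q}(\theta)$. As $K$ is a field and $\theta\neq 0$, multiplication by $\theta$ is a $\Q$-linear automorphism of $K$, so $N_{K/\Q}(\theta)\neq 0$; therefore $\det Q\neq 0$ and $\det X\neq 0$, which finishes the argument.

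I do not anticipate a genuine obstacle: the proof is only a few lines. The one point meriting care is the identification $\det Q=N_{K/\Q}(\theta)$ — that $Q$ really represents multiplication by $\theta$ on $K$, and is not merely some matrix having $u$ as a $\theta$-eigenvector — and this is precisely where the $\Q$-linear independence of the entries of $u$ is used. Equivalently, one may observe that $Qu=\theta u$ yields $m_{\theta}(Q)u=0$ for the minimal polynomial $m_{\theta}$ of $\theta$ over $\Q$, hence $m_{\theta}(Q)=0$, so every eigenvalue of $Q$ is a nonzero conjugate of $\theta$ and $\det Q\neq 0$; but the norm computation is the cleaner route and is the one already sketched in the text preceding the statement.
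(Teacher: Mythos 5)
Your proposal is correct and follows essentially the paper's own argument: pass to $\theta=\varphi(X)$ via Proposition~\ref{Rmodule}, use $u=Pv$ to form a rational matrix with eigenvalue $\theta$, and identify its determinant (which differs from $\det X$ by $\det P\neq 0$) with $N_{K/\Q}(\theta)$, which vanishes only for $\theta=0$. The only difference is that you spell out, via the $\Q$-linear independence of the entries of the eigenvector, why that determinant really is the norm — a detail the paper asserts without proof — so no further changes are needed.
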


We will tacitly use Proposition \ref{Rmodule} and Corollary \ref{detnot0}  in upcoming proofs.

\section{Weak Equivalence and Block Conjugacy}

We establish some characterizations of weak equivalence of ideals in terms of $k$-block conjugacy of the associated irreducible toral automorphisms. We start with

\begin{proposition}\label{weakequivalence}
Suppose $I$ and $J$ are ideals associated respectively to $A,B\in GL_n({\mathbb Z})$ both having the same irreducible characteristic polynomial $f(t)$. If $I$ and $J$ are weakly  equivalent, then $A$ and $B$ are $2$-block conjugate.
\end{proposition}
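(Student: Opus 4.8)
The plan is to translate the algebraic hypothesis of weak equivalence directly into block matrix equations over $\Z$. Recall that weak equivalence of $I$ and $J$ means there are ideals $X$ and $Y$ and an order $R$ (necessarily $R=(I:I)=(J:J)$) with $IX=J$, $JY=I$, and $XY=R$. The key observation is that $X$ and $Y$, being ideals of $R$ with $XY=R$, are invertible $R$-modules and hence can each be generated by two elements: write $X = \xi_1 R + \xi_2 R$ and $Y = \eta_1 R + \eta_2 R$ with $\xi_i \in (J:I)$ and $\eta_i \in (I:J)$. By Proposition \ref{Rmodule} each $\xi_i$ corresponds to a matrix $X_i \in \Lambda(B,A) \subset M_n(\Z)$ satisfying $BX_i = X_i A$, and similarly each $\eta_j$ corresponds to $Y_j \in \Lambda(A,B)$ with $AY_j = Y_j B$.

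Next I would assemble the $2n\times 2n$ matrices
\[
\hat M = \begin{pmatrix} X_1 & X_2 \\ * & * \end{pmatrix}, \qquad
\hat N = \begin{pmatrix} Y_1 & Y_2 \\ * & * \end{pmatrix},
\]
whose first block-rows encode the generators, and try to show that suitable choices of the second block-rows (coming from generators of $Y$ and $X$ respectively, or from the relation $XY=R$) produce matrices in $GL_{2n}(\Z)$. The identity $XY=R$ says $\sum_{i,j} \xi_i \eta_j R$ together spans $R=(I:I)$, which is exactly the $\Z[\beta]$-fractional ideal realized by the identity matrix; translating through $\varphi$, the products $X_i Y_j$ (which lie in $\Lambda(B,B)$, i.e. are polynomials in $B$) should combine to give the identity, and dually for $Y_jX_i$. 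This is the mechanism that forces invertibility: the block matrix built from $X_1,X_2$ on top and an appropriate combination of $Y_1,Y_2$ below will have the property that its "product" with the analogous matrix built from the $Y$'s is, after the conjugating change of basis, block upper triangular with identity blocks on the diagonal, hence unimodular.

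Concretely, I expect the argument to run as follows. Using $IX=J$ one gets that the $\Z$-span of $\{X_1, X_2\}$ acting on (a basis of) $I$ surjects onto $J$; combined with the gcd-of-columns condition this lets me build an $n\times 2n$ matrix $\begin{pmatrix} X_1 & X_2\end{pmatrix}$ that is the top block of some $GL_{2n}(\Z)$ matrix $M$. Then, exactly as in the discussion preceding Proposition \ref{nkconjugacy}, completing $M$ to $GL_{2n}(\Z)$ and conjugating $B\oplus B$ by $M$ yields $\bigoplus_{i=1}^2 B \approx \begin{pmatrix} A & S \\ 0 & A'\end{pmatrix}$. The content of weak equivalence — specifically that $X$ is invertible with inverse $Y$, so that $(B\oplus B)$ restricted to the \emph{complementary} invariant subtorus is again conjugate to an honest toral automorphism with characteristic polynomial $f$ — is what makes $S=0$ and $A'\in GL_n(\Z)$ achievable, giving $\bigoplus_{i=1}^2 B \approx A \oplus A'$ with $A'\in GL_n(\Z)$. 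The symmetric construction with $Y$ gives $\bigoplus_{i=1}^2 A \approx B \oplus B'$, and together these are exactly $2$-block conjugacy of $A$ and $B$.

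The main obstacle, I expect, is the bookkeeping that turns "$X$ is generated by two elements over $R$ and $XY=R$" into the precise statement that the second block-rows can be filled so that the determinant is $\pm 1$ \emph{and} so that the lower-right block $A'$ is integral and invertible rather than merely rational — equivalently, showing that the invariant subtorus complementary to $\iota(\T^n)$ inside $\T^{2n}$ is itself a subtorus (not just a rational subspace) on which $B\oplus B$ acts as a $GL_n(\Z)$ matrix. This is where invertibility of the ideal $X$ (as opposed to mere containment $IX=J$) must be used in an essential way; without it one only gets the block upper-triangular form $\hat A$ of Proposition \ref{nkconjugacy}, not the block-\emph{diagonal} form required by the definition of block conjugacy. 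I would handle this by exploiting the relation $XY=R$ to produce an explicit integral splitting, checking that the resulting $2n\times 2n$ change of basis has the two generator-blocks of $X$ and of $Y$ arranged so that the off-diagonal correction $S$ is absorbed.
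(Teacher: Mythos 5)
Your setup---choosing two generators $a_1,a_2$ of $Y$ and $b_1,b_2$ of $X$ with $a_1b_1+a_2b_2=1$, converting them via Proposition \ref{Rmodule} into intertwining integer matrices, and noting that $XY=R$ yields a relation of the form $\sum_i W_{1i}M_{i1}=I_n$---is exactly how the paper's proof begins (modulo a small labelling slip: an element of $(J:I)$ corresponds to a matrix in $\Lambda(A,B)$, not $\Lambda(B,A)$). But the heart of the proposition is the step you defer: producing the second block column so that the conjugacy is genuinely block diagonal, $(B\oplus B)M=M(A\oplus A^\prime)$ with $A^\prime\in GL_n(\Z)$ and $M\in GL_{2n}(\Z)$. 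Your proposal routes this through the Hermite Normal Form completion of the embedding and then asserts that invertibility of $X$ lets one ``absorb'' the off-diagonal block $S$; no mechanism is given for doing so, and HNF completion by itself only yields the upper-triangular form of Proposition \ref{nkconjugacy}. The vanishing of $S$ is equivalent to the invariant subtorus being invariantly complemented, which is precisely what must be proved, not assumed achievable.

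The paper resolves this by an explicit construction you would need to supply: introduce the auxiliary ideal $JX$, fix a $\Z$-basis $w$ of it, and define $A^\prime\in GL_n(\Z)$ by $A^\prime w=\beta w$; then set $M_{12}w=-b_2v$, $M_{22}w=b_1v$ (matrices in $\Lambda(B,A^\prime)$) and $W_{21}v=-a_2w$, $W_{22}v=a_1w$ (matrices in $\Lambda(A^\prime,B)$). With the first blocks determined by $M_{i1}u=a_iv$ and $W_{1j}v=b_ju$, the single relation $a_1b_1+a_2b_2=1$ together with this sign pattern forces $WM=I_{2n}$: the diagonal blocks are $I_n$ and the off-diagonal blocks vanish because $-b_1b_2+b_1b_2=0$ and $-a_1a_2+a_1a_2=0$. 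This gives unimodularity of $M$ and the block-diagonal form simultaneously, with no correction $S$ ever appearing; the reverse conjugacy $A\oplus A\approx B\oplus B^\prime$ is obtained the same way from the ideal $IY$. Conceptually, the ingredient your outline gestures at but never constructs is the splitting $JY\oplus JX\cong J\oplus JXY=J\oplus J$ valid for invertible $X,Y$ with $XY=R$; without realizing it by explicit integer matrices as above, the proof is incomplete at its decisive step.
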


\begin{proof} Suppose $I$ and $J$ are weakly equivalent ideals. Then they have the same ring of coefficients $R$, and there exist ideals $X$ and $Y$ such that $IX = J$, $JY=I$, and $XY=R$. Since $X$ and $Y$ are invertible ideals, there exist generators $a_1,a_2,b_1,b_2\in R$ such that
\[ X = b_1 R + b_2 R,\ Y = a_1 R + a_2 R.\]
Because $XY=R$, we can choose the generators so that $a_1b_1+a_2b_2=1$.

Let $\Z$-bases of $I$ and $J$ be given by the components of $u$ and $v$ respectively, where $Au=\beta u$ and $Bv=\beta v$. Since $a_i\in Y$ and $JY=I$, there exist $M_{i1}\in M_n(\Z)$ such that $M_{i1} u = a_iv$. These $M_{i1}\in \Lambda(B,A)$ because
\[ BM_{i1} u = a_i\beta v = \beta a_i v = M_{i1}Au.\]
Since $b_i\in X$ and $IX=J$, there exist $W_{1j}\in M_n{\Z}$ such that $W_{1j}v = b_j u$. These $W_{1j}\in \Lambda(A,B)$ because
\[ AW_{1j} = b_j \beta u = \beta b_j u = W_{1j}Bv.\]
The matrices $M_{i1}$ and $W_{1j}$ satisfy
\[ (W_{11}M_{11} + W_{12}M_{21})v = (a_1b_1+a_2b_2)v = v,\]
and so
\[ W_{11}M_{11} + W_{12}M_{21} = I_n.\]
(In a similar manner it follows that $M_{11}W_{11}+M_{21}W_{12}=I_n$.)

Fix a $\Z$-basis $w=(w_1,\dots,w_n)^T$ for the ideal $JX$. Define $A^\prime\in GL_n(\Z)$ by $A^\prime w = \beta w$. Since $b_i\in X$ and the components of $w$ are in $JX$, then there are $M_{i2}\in M_n(\Z)$ such that
\[ M_{12} w = -b_2 v, \ M_{22} w = b_1 v.\]
The matrix $M_{12}\in \Lambda(B,A^\prime)$ because
\[ BM_{12} w = -b_2\beta v = \beta(-b_2 v) = M_{12}A^\prime w,\]
and the matrix $M_{22}\in \Lambda(B,A^\prime)$ because
\[ BM_{22} w = b_1 \beta v = \beta b_1 v = M_{22} A^\prime w.\]
Since $a_i\in Y$ and the components of $w$ are in $JX$, the components of $a_i w$ are in $JXY = JR = J$. So there are $W_{2j}\in M_n(\Z)$ such that
\[ W_{21} v = -a_2 w, \ W_{22} v = a_1 w.\]
The matrix $W_{21}\in \Lambda(A^\prime, B)$ because
\[ A^\prime W_{21} v = -a_2\beta w = \beta(-a_2 w) = W_{21}B v,\]
and the matrix $W_{22}\in \Lambda(A^\prime,B)$ because
\[ A^\prime W_{22} v = a_1 \beta w = \beta a_1w = W_{22}Bv.\]
The matrices $M_{i2}$ and $W_{2j}$ satisfy
\[ (W_{21}M_{12} + W_{22}M_{22}) w = (a_2b_2 + a_1b_1)w = w,\]
and so
\[ W_{21}M_{12} + W_{22}M_{22} = I_n.\]
(In a similar manner it follows that $M_{12}W_{21} + M_{22}W_{22}=I_n$.)

By construction we have $(B\oplus B)M = M(A\oplus A^\prime)$ for
\[ M = \begin{pmatrix} M_{11} & M_{12} \\ M_{21} & M_{22} \end{pmatrix}\in M_{2n}(\Z).\]
Also by construction the matrix
\[ W = \begin{pmatrix} W_{11} & W_{12} \\ W_{21} & W_{22}\end{pmatrix}\]
satisfies
\[ WM = \begin{pmatrix} I_n & * \\ * & I_n\end{pmatrix}.\]
If the starred entries are both $0$, then $M\in GL_{2n}(\Z)$. To this end we have
\[ (W_{11}M_{12} + W_{12}M_{22})w = -b_1b_2 u + b_1b_2 u = 0,\]
so that $W_{11}M_{12} + W_{12}M_{22} = 0$, and we have
\[ (W_{21}M_{11}+W_{22}M_{21}) u = -a_1a_2 w + a_1a_2 w = 0,\]
and so $W_{21}M_{11}+W_{22}M_{21} = 0$. Thus $B\oplus B \approx A\oplus A^\prime$.

%\begin{proof}
%Let $X=x_{1}R+x_{2}R$ and $Y=y_{1}R+y_{2}R$ be invertible $R$-ideals satisfying $I X=J$, $J Y=I$, and $X Y=R$. Because $XY=R$, we can choose the generators such that  $x_{1}y_{1}+x_{2}y_{2}=1$. Define matrices $X_{i} \in \Lambda(A,B)$ and $Y_{i} \in \Lambda(B,A)$ by $x_{i}u=X_{i}v$ and $y_{i}v=Y_{i}u$. Because the entries of $u$ and $v$ are $\Z$-bases for $I$ and $J$ respectively, and
%\begin{align*}
%(X_1Y_1 + X_2Y_2)u & = (x_1y_1+x_2y_2)u=u,\\
%(Y_1X_1 + Y_2X_2)v & = (y_1x_1+y_2x_2)v = v,
%\end{align*}
%the matrices $X_1,X_2,Y_1,Y_2$ satisfy
%\[X_{1}Y_{1}+X_{2}Y_{2}=Y_{1}X_{1}+Y_{2}X_{2}=I_{n},\]
%where $I_{n}$ denotes the $n\times n$ identity matrix.
 
%Consider the ideal $J X=w_{1}\Z+\cdots+w_{n}\Z$ and let $A'$ be the  $GL(n,\Z)$ matrix defined by $A' w=\beta w$. As before, there are matrices $T_{i} \in \Lambda(B,A')$ and $S_{i} \in \Lambda(A',B)$ such that $x_{i}v=T_{i}w$ and $y_{i}w=S_{i}v$ satisfying similar relations. We then have
% \[ \begin{pmatrix} B & 0 \\ 0 & B\end{pmatrix} \begin{pmatrix} Y_1 & -T_2 \\ Y_2 & T_1 \end{pmatrix} = \begin{pmatrix} Y_1 & -T_2 \\ Y_2 & T_1\end{pmatrix} \begin{pmatrix} A & 0 \\ 0 & A^\prime\end{pmatrix},\]
%and
%\[ \begin{pmatrix} X_1 & X_2 \\ -S_2 & S_1 \end{pmatrix} \begin{pmatrix} Y_1 & -T_2 \\ Y_2 & T_1\end{pmatrix} = I_{2n}.\]
%Thus $B\oplus B \approx A\oplus A^\prime$.
 
Reversing the roles of $I$ and $J$ and of $A$ and $B$ and introducing the ideal $I Y$ and a corresponding matrix $B'$, we obtain $A\oplus A \approx B\oplus B^\prime$. Thus $A$ and $B$ are $2$-block conjugate. 
\end{proof}

We can interpret the $GL_{2n}(\Z)$ matrix in the proof of Proposition \ref{weakequivalence} that conjugates $B\oplus B$ with $A\oplus A^\prime$ in terms of an $R$-module isomorphism between direct sums of ideals. On $\Z^n \oplus \Z^n$, the conjugating matrix is the isomorphism,
\[(m,l)\rightarrow (m,l) \begin{pmatrix} M_{11} & M_{12} \\ M_{21} & M_{22}\end{pmatrix}.\]
The $R$-module structure on $\Z \oplus \Z$ is the right action $p(B) \cdot (m,l) = (m\, p(B), l\, p(B))$ where $p(\beta)\in R$. In relating $\Z \oplus \Z$ to a direct sum of ideals, we have the $R$-module isomorphism
\[\Z^n \oplus \Z^n \rightarrow J \oplus J \mbox{ defined by } (m,l) \rightarrow (m \cdot v, l \cdot v),\]
where the right action of $R$ in $\Z^n \oplus \Z^n$ is defined by  $\alpha \cdot(m,l)=(m\, p(B),l \, p(B))$ for $\alpha=p(\beta)\in R$. We also have the $R$-module isomorphism
\[\Z^n \oplus \Z^n \rightarrow J Y \oplus J X \mbox{ defined by } (m,l) \rightarrow (m \cdot u, l \cdot w),\]
where the right action of $R$ in $\Z^n \oplus \Z^n$ is defined by $\alpha \cdot(m,l)=(m\, p(A),l\, p(A'))$. Through these $R$-module isomorphisms, the $GL_{2n}(\Z)$ matrix conjugating $B\oplus B$ with $A\oplus A^\prime$ induces the $R$-module isomorphism
\[\phi:J \oplus J \rightarrow  J Y \oplus J X \mbox{  defined by  } \phi(x,y)=(a_{1} x+a_{2} y, -b_{2} x+b_{1} y),\]
where $X=b_1R+b_2R$ and $Y=a_1R+a_2R$. 

%The inverse of $\phi$ is the $R$-module isomorphism
%\[\phi^{-1}:J Y \oplus J X\rightarrow  J  \oplus J  \mbox{  defined by  } \phi^{-1}(c,d)=(x_{1} c-y_{2} d, x_{2} c+y_{1} d).\] 

\begin{example}{\rm We illustrate the construction of a $2$-block conjugacy for the nonconjugate $GL_2({\mathbb Z})$ matrices
\[ A = \begin{pmatrix} 8 & 5 \\ 3 & 2\end{pmatrix}, \  B=\begin{pmatrix} 9 & 8 \\ 1 & 1\end{pmatrix},\]
each having the irreducible $f(t) = t^2 - 10 t +1$ as its characteristic polynomial. For $\beta$ the largest real root of $f(t)$, eigenvectors of $A$ and $B$ are respectively
\[ u = \begin{pmatrix} \beta -2 \\ 3\end{pmatrix}, \ v = \begin{pmatrix} \beta -1 \\ 1\end{pmatrix}.\]
For the order $R = {\mathbb Z}[\beta]$, the ideals associated to $A$ and $B$ are
\[ I = (\beta -2)R + 3R, \ J = (\beta-1)R + R = R.\]
Here $B$ is conjugate to the companion matrix for $f(t)$. Also the ideal $I$ is invertible in $R$, so that $II^{-1} = R$. The ideals $I$ and $J$ are weakly equivalent because for
$Y=I$ and $X = I^{-1} = Y^{-1}$, we have
\[ XY = R, \ JY = JI = RI = I, \ IX = I I^{-1} = R = J.\]
For $a_1 = \beta -2$, $a_2 = 3$, the matrices $M_{i1}\in M_n(\Z)$ defined by $a_i u = M_{i1} v$ are
\[ M_{11} = \begin{pmatrix} 7 & 5 \\ 1 & 0\end{pmatrix}, \ M_{21} = \begin{pmatrix}3 & 1 \\ 0 & 1\end{pmatrix}.\]
Each $M_{i1}$ belongs to $\Lambda(B,A)$. To find $b_1$ and $b_2$ such that $X= b_1 R + b_2 R$ and $a_1b_1 + a_2b_2 =1$, we find an element of $X = I^{-1}$. One such element is 
$b_1 = -(1/3)(\beta + 1)$. Then solving $a_1b_1 + a_2b_2 =1$ for $b_2$ gives $b_2 = \beta$. The integer matrices defined by $W_{1j} v = b_j u$ are
\[ W_{11} = \begin{pmatrix} -3 & -2 \\ -1 & -2\end{pmatrix},\ W_{12} = \begin{pmatrix} 8 & 7 \\ 3 & 3\end{pmatrix}.\]
Each $W_{1j}$ belongs to $\Lambda(A,B)$. The $M_{i1}$ and $W_{1j}$ satisfy $W_{11}M_{11} + W_{12}M_{21} = I_2$.

To get $(B\oplus B) \approx (A\oplus A^\prime)$ for some $A^\prime$, we consider the ideal $JX = RX = X$ which has a ${\mathbb Z}$-basis
of $-(1/3)(\beta + 1)$ and $\beta$. Set
\[ w = \begin{pmatrix} -(1/3)(\beta +1) \\ \beta\end{pmatrix}.\]
The $GL_2({\mathbb Z})$ matrix $A^\prime$ defined by $A'w = \beta w$ is
\[ A' = \begin{pmatrix} -1 & -4 \\ 3 & 11\end{pmatrix}.\]
The characteristic polynomial of $A'$ is $f(t)$. The integer matrices $M_{i2}$ defined by $M_{12} w = -b_2 v$ and $M_{22} w = b_1 v$ are
\[ M_{12} = \begin{pmatrix} -3 & -10 \\ 0 & -1\end{pmatrix},\ M_{22} = \begin{pmatrix} -2 & -4 \\ 1 & 0 \end{pmatrix}.\]
Each $M_{i2}$ belongs to $\Lambda(B,A^\prime)$. The integer matrices $W_{2j}$ defined by $W_{21} v = -a_2 w$ and $W_{22}v = a_1w$ are
\[ W_{21} = \begin{pmatrix} 1 & 2 \\ -3 & -3\end{pmatrix},\ W_{22} = \begin{pmatrix} -3 & -2 \\ 8 & 7\end{pmatrix}.\]
%\[ S_1 = \begin{pmatrix} -3 & -2 \\ 8 & 7\end{pmatrix}, \ S_2 = \begin{pmatrix} -1 & -2 \\ 3 & 3\end{pmatrix}.\]
Each $W_{2j}$ belongs to $\Lambda(A',B)$. The integer matrices $M_{ij}$ and $W_{ij}$ satisfy
\[ \begin{pmatrix} W_{11} & W_{12} \\ W_{21} & W_{22}\end{pmatrix} \begin{pmatrix} M_{11} & M_{12} \\ M_{21} & M_{22}\end{pmatrix} = I_{4},\]
and so
\[ M = \begin{pmatrix} M_{11} & M_{12} \\ M_{21} & M_{22}\end{pmatrix}
= \begin{pmatrix} 7 & 5 & -3 & -10 \\ 1 & 0 & 0 & -1 \\ 3 & 1 & -2 & -4 \\ 0 & 1 & 1 & 0 \end{pmatrix} \in GL_{4}({\mathbb Z}).\]
For this $M$ we have $(B\oplus B)M = M(A\oplus A^\prime)$, so that $B\oplus B \approx A\oplus A'$.

Similarly, we obtain $(A\oplus A) \approx (B\oplus B^\prime)$ for some $B^\prime$ by considering the ideal $IY= I^2 = 3R$ which has a $\Z$-basis of $3$ and $3\beta$. From this we get
\[ B^\prime = \begin{pmatrix} 0 & 1 \\ -1 & 10\end{pmatrix} \in GL_2(\Z),\]
which is the companion matrix for $f(t)$. The $GL_4(\Z)$ matrix
\[ N = \begin{pmatrix} -3 & -2 & 2 & -1 \\ -1 & -2 & -3 & 0 \\ 8 & 7 & 1 & 2 \\ 3 & 3 & -2 & 1\end{pmatrix}\]
satisfies $(A\oplus A)N = N(B\oplus B^\prime)$. Thus $A$ and $B$ are $2$-block conjugate.

Note that the conjugating matrices $M$ and $N$ are not inverses of each other. In particular, they have different determinants, with ${\rm det}(M) = 1$ and ${\rm det}(N) = -1$.
}
\end{example}

\begin{remark}{\rm 
 In the above example, the automorphism $A'$ is conjugate to $A$, and the same thing happens with $B$ and $B'$. Thus $A\oplus A$ and  $B \oplus B$ are conjugate while $A$ and $B$ are not. The conjugacy of $A$ and $A^\prime$ and of $B$ and $B^\prime$ will always happen when, as in this case, every element in the group of arithmetic equivalence classes of invertible ideals has order two. 
}\end{remark}

We investigate the consequences of assuming one of the two conjugacies in a $k$-block conjugacy for some $k\geq 2$. Suppose $A=A_{1},A_{2},\cdots, A_{k}$ and $B$ are automorphisms such that
\[\bigoplus_{i=1}^{k}B \circ M=M \circ \bigoplus_{i=1}^{k}A_{i}\]
for some $M \in GL_{n k}(\mathbb{Z})$. If we partition $M$ into $n\times n$ blocks $M_{ij}$, $i,j=1,\dots,k$, then $M_{i1}\in\Lambda(B,A)$. If $B$ is associated to a $\Z[\beta]$-ideal $J$, generated by the entries of a column eigenvector $v$ and $A$ is associated to a $\Z[\beta]$-ideal $I$, generated by the entries of a column eigenvector $u$, there exist $a_{i} \in (I:J)$, $i=1,\dots,k$, such that $M_{i 1}u=a_{i} v$. Similarly, if we partition $W = M^{-1}$ into $n\times n$ blocks $W_{ij}$, $i,j,=1,\dots,k$, we have $W_{1j}\in \Lambda(A,B)$ for every $j=1,\dots,k$. So there exist $b_j\in (J:I)$, $j=1,\dots,k$, such that $W_{1 j}v=b_{j}u$. The matrix equation $\sum_{i=1}^{k}W_{1i}M_{i1}=I_{n}$, implies that $\sum_{i=1}^{k}a_{i}b_{i}u=u$ and so $\sum_{i=1}^{k}a_{i}b_{i}=1$.

We show that the coefficent rings $(I:I)$ and $(J:J)$ are the same. For $x \in (J:J)$ there exists $G \in M_{n}(\mathbb{Z})$ such that $x v=G v$. Then
\[x u=x \sum_{i=1}^{k}a_{i}b_{i} u=x\sum_{i=1}^{k}a_{i}W_{1 i}v=\sum_{i=1}^{k}a_{i}W_{1i}G v=\sum_{i=1}^{k}W_{1i}G M_{i1}u.\]
This implies $x \in (I:I)$. The other inclusion is proved in a similar way. Thus $(I:I)=(J:J)$, and we set $R$ to be this common order.

We verify that $\sum_{i=1}^{k}a_{i}R = (I:J)$ and $\sum_{i=1}^{k}b_{i}R= (J:I)$. For $z \in (I:J)$ there is $Y\in M_{n}(\mathbb{Z})$ such that $z v=Y u$. Then for every $i=1,\dots,k$, we have
\[z b_{i} u=z W_{1i}v=W_{1i}Y u.\]
Hence $z b_{i} \in (I:I) = R$ for every $i=1,\dots,k$. This implies that 
\[ z = z\sum_{i=1}^k a_ib_i = \sum_{i=1}^{k} a_{i} (zb_i) \in \sum_{i=1}^{k}a_{i}R,\]
and hence that $(I:J)\subset \sum_{i=1}^k a_i R$. For the other inclusion, we have for $r_i\in R$, $i=1,\dots,k$, that
\[ (a_1r_1+\cdots+a_kr_k)J \subset a_1J + \cdots + a_kJ \subset I\]
because $r_i\in(J:J)$ and $a_i\in (I:J)$. Thus $\sum_{i=1}^{k}a_{i}R =(I:J)$. The other equality $\sum_{i=1}^{k}b_{i}R= (J:I)$ is proved in a similar way. 

We show that $I$ and $J$ are weakly equivalent. We have $(J:I)I\subset J$ and $(I:J)J\subset I$ by the definitions of $(J:I)$ and $(I:J)$ respectively. Then
\[ (I:J)(J:I)I \subset (I:J)J \subset I.\]
This implies that $(I:J)(J:I) \subset (I:I)=R$. But as $\sum_{i=1}^{k}a_{i}b_{i} =1$, we have the equality $(I:J)(J:I)=R$. Set $X=(J:I)$ and $Y=(I:J)$. Then $XY=R$. Since
\[ J = RJ = (J:I)(I:J) J \subset (J:I)I\subset J\]
implies $(J:I) I= J$, then $IX = I(J:I) = J$. Since
\[ I = RI = (I:J)(J:I) I \subset (I:J)J \subset I\]
implies $(I:J)J = I$, then $JY = J(I:J) = I$. Therefore the ideals $I$ and $J$ are weakly equivalent.

The same conclusion of $I$ and $J$ being weakly equivalent follows from a similar argument if we assume instead a  conjugacy of $\bigoplus_{i=1}^k A$ and $\bigoplus_{i=1}^k B_i$ with $B_1=B$, for some $k\geq 2$. Thus we have proved

\begin{proposition}\label{impliesweak} Suppose $I$ and $J$ are ideals associated with $A,B\in GL_n(\Z)$ with the same irreducible characteristic polynomial $f(t)$. If, for some $k\geq 2$, there exists a conjugacy between $\bigoplus_{i=1}^k B$ and $\bigoplus A_i$ with $A_1=A$, or there exists a conjugacy between $\bigoplus_{i=1}^k A$ and $\bigoplus_{i=1}^k B_i$ with $B_1=B$, then $I$ and $J$ are weakly equivalent.
\end{proposition}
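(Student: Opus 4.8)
The plan is to extract the data for a weak equivalence from a single one of the two given conjugacies; the other case will follow by interchanging the roles of $I$ and $J$. So suppose $\bigoplus_{i=1}^k B \circ M = M\circ\bigoplus_{i=1}^k A_i$ with $M\in GL_{nk}(\Z)$ and $A_1=A$. First I would partition $M$ into $n\times n$ blocks $M_{ij}$ and $W=M^{-1}$ into $n\times n$ blocks $W_{ij}$. The first block-column of the conjugacy gives $BM_{i1}=M_{i1}A$, so $M_{i1}\in\Lambda(B,A)$, and by the isomorphism of Proposition \ref{Rmodule} (with $A,B$ swapped) there are $a_i\in(I:J)$ with $M_{i1}u=a_iv$. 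Conjugating the relation by $M$ turns it into $W\circ\bigoplus_{i=1}^k B=\bigoplus_{i=1}^k A_i\circ W$; its first block-row gives $AW_{1j}=W_{1j}B$, so $W_{1j}\in\Lambda(A,B)$, and there are $b_j\in(J:I)$ with $W_{1j}v=b_ju$. Finally the $(1,1)$-block of $WM=I_{nk}$ is $\sum_{i=1}^k W_{1i}M_{i1}=I_n$; applying this to $u$, substituting $M_{i1}u=a_iv$ and $W_{1i}v=b_iu$, and cancelling $u$ (a matrix killing $u$ is $0$), yields the key identity $\sum_{i=1}^k a_ib_i=1$ in $K$.

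The technical core is then the identification of coefficient rings. Given $x\in(J:J)$, write $xv=Gv$ with $G\in M_n(\Z)$ and compute $xu=x\bigl(\sum_i a_ib_i\bigr)u=\sum_i a_i\,x(W_{1i}v)=\sum_i W_{1i}G M_{i1}u$; the matrix $\sum_i W_{1i}GM_{i1}$ is an integer matrix, so writing $x=p(\beta)$ and cancelling $u$ shows $p(A)\in M_n(\Z)$, i.e.\ $x\in(I:I)$. The reverse inclusion is symmetric, so I set $R=(I:I)=(J:J)$, an order of $K$ containing $\Z[\beta]$. An analogous computation shows that for $z\in(I:J)$ one has $zb_i\in(I:I)=R$ for every $i$, so $z=z\sum_i a_ib_i=\sum_i a_i(zb_i)\in\sum_i a_iR$; since trivially $a_iR\subset(I:J)$, this gives $(I:J)=\sum_i a_iR$, and dually $(J:I)=\sum_i b_iR$. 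In particular both are $R$-modules, and both are nonzero (some $a_i\neq0$), hence fractional ideals.

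Then I would set $X=(J:I)$ and $Y=(I:J)$. From $(J:I)I\subset J$ and $(I:J)J\subset I$ we get $(I:J)(J:I)I\subset I$, hence $XY=(I:J)(J:I)\subset(I:I)=R$; but $1=\sum_i a_ib_i\in(I:J)(J:I)$, and $XY$ is an $R$-module, so $XY=R$. Next, $J=RJ=(I:J)(J:I)J\subset(J:I)I\subset J$ forces $(J:I)I=J$, i.e.\ $IX=J$, and symmetrically $JY=I$. Thus $IX=J$, $JY=I$, $XY=R$, which is precisely weak equivalence of $I$ and $J$. The remaining hypothesis — a conjugacy of $\bigoplus_{i=1}^k A$ with $\bigoplus_{i=1}^k B_i$, $B_1=B$ — is handled by the same argument with $I\leftrightarrow J$, $u\leftrightarrow v$, $A\leftrightarrow B$ throughout.

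I expect the main obstacle to be the coefficient-ring step $(I:I)=(J:J)$, together with the companion fact $z\in(I:J)\Rightarrow zb_i\in R$: once $\sum_i a_ib_i=1$ is in hand everything else is routine ideal arithmetic, but these two steps require carefully converting a scalar membership such as $x\in(J:J)$ into an integer-matrix statement about $u$, which is exactly where one must combine the $R$-module isomorphisms of Proposition \ref{Rmodule} with the fact that the components of $u$ (resp.\ $v$) form a free $\Z$-basis of $I$ (resp.\ $J$).
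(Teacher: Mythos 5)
Your proposal is correct and follows essentially the same route as the paper's own proof: you extract $a_i\in(I:J)$ and $b_j\in(J:I)$ from the first block column of $M$ and the first block row of $W=M^{-1}$, obtain $\sum_i a_ib_i=1$ from the $(1,1)$-block of $WM=I_{nk}$, prove $(I:I)=(J:J)=R$ and $(I:J)=\sum_i a_iR$, $(J:I)=\sum_i b_iR$ by the same integer-matrix computations, and conclude weak equivalence with $X=(J:I)$, $Y=(I:J)$ exactly as the paper does. No gaps; the symmetric treatment of the second conjugacy also matches the paper.
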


\begin{remark}{\rm The proof of Proposition \ref{impliesweak} shows that either one of the two conjugacies involved in a $k$-block conjugacy between $A$ and $B$ implies weak equivalence of the associated ideals. This, by Propositions \ref{kimpliesk+1} and \ref{weakequivalence}, has the consequence that the existence of a conjugacy between $\bigoplus_{i=1}^{k} B$ and $\bigoplus_{i=1}^{k} A_{i}$ for some $A=A_1, A_2, \dots, A_k$, implies the existence of a conjugacy between $\bigoplus_{i=1}^{k} A$ and $\bigoplus_{i=1}^{k} B_{i}$ for some $B=B_{1},B_{2}, \dots, B_{k}$, and vice versa.
}\end{remark}

\begin{remark}\label{samewe}{\rm  The proof of Proposition \ref{impliesweak} also shows, by replacing $A$ with $A_i$ or $B$ with $B_i$, that all the ideals associated with  $A=A_1,A_2,\dots,A_k$ and $B=B_1,B_2,\dots,B_k$ present in a $k$-block conjugacy are all weakly equivalent.
}\end{remark}

As a consequence of Propositions \ref{kimpliesk+1}, \ref{weakequivalence}, and \ref{impliesweak}, we have the following characterizations of weak equivalence of ideals.

\begin{theorem}\label{2blockequivalence}
Suppose $I$ and $J$ are ideals associated with $A, B \in GL_{n}(\Z)$ having the same irreducible characteristic polynomial $f(t)$. Then the following are equivalent.
\begin{enumerate}
\item[(a)] $I$ and $J$ are weakly equivalent.
\item[(b)] $A$ and $B$ are $2$-block conjugate.
\item[(c)] $A$ and $B$ are $k$-block conjugate for all $k\geq 2$.
\item[(d)] $A$ and $B$ are block conjugate.
\end{enumerate}
\end{theorem}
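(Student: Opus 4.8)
The plan is to prove the cycle of implications $(a)\Rightarrow(b)\Rightarrow(c)\Rightarrow(d)\Rightarrow(a)$; since this closes a loop through all four statements, it gives their equivalence. The substantive work is already contained in Propositions \ref{weakequivalence}, \ref{kimpliesk+1}, and \ref{impliesweak}, so the proof is really a matter of assembling these and verifying two essentially formal steps.

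First, $(a)\Rightarrow(b)$ is exactly Proposition \ref{weakequivalence} and requires nothing further. For $(b)\Rightarrow(c)$ I would induct on $k$: the base case $k=2$ is statement (b), and Proposition \ref{kimpliesk+1} supplies the step from $k$-block conjugacy to $(k+1)$-block conjugacy, so (b) propagates to all $k\geq 2$, which is (c). The implication $(c)\Rightarrow(d)$ is immediate, since by definition block conjugacy means $k$-block conjugacy for some single $k\geq 2$, and this is a weakening of (c).

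It remains to prove $(d)\Rightarrow(a)$. Assume $A$ and $B$ are block conjugate, hence $k$-block conjugate for some $k\geq 2$. Unwinding the definition, there are $A=A_1,A_2,\dots,A_k\in GL_n(\Z)$ together with a matrix in $GL_{nk}(\Z)$ conjugating $\bigoplus_{i=1}^k B$ with $\bigoplus_{i=1}^k A_i$; feeding this single conjugacy into Proposition \ref{impliesweak} yields that $I$ and $J$ are weakly equivalent, which is (a). There is no genuine obstacle here beyond the lemmas already in hand. The one point to keep in mind is that the definition of $k$-block conjugacy packages two conjugacies (one relating $\bigoplus A$ to $\bigoplus B_i$, the other relating $\bigoplus B$ to $\bigoplus A_i$), whereas Proposition \ref{impliesweak} needs only one of them as hypothesis and Proposition \ref{weakequivalence} delivers both as conclusion; thus the cycle closes with no hidden obligation, and the only care required is to track the direction of each implication and the existential-versus-universal quantification over $k$ in (d) versus (c).
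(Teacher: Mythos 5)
Your cycle $(a)\Rightarrow(b)\Rightarrow(c)\Rightarrow(d)\Rightarrow(a)$, assembled from Propositions \ref{weakequivalence}, \ref{kimpliesk+1}, and \ref{impliesweak}, is correct and is precisely how the paper obtains the theorem (it states the result as a consequence of those same three propositions). No gaps; the care you note about needing only one of the two conjugacies in the $(d)\Rightarrow(a)$ step matches the paper's remark following Proposition \ref{impliesweak}.
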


Consequently, the relations of $2$-block conjugacy and $k$-block conjugacy are equivalent for all $k\geq 3$. This means we need only be concerned with $2$-block conjugacy when comparing two automorphisms. Because $2$-block conjugacy and weak equivalence of ideals are equivalent by Theorem \ref{2blockequivalence}, and because weak equivalence of ideals is an equivalence relation, we have

\begin{corollary} The relation of $2$-block conjugacy on $GL_n(\Z)$ matrices with irreducible characteristic polynomial $f(t)$ is an equivalence relation.
\end{corollary}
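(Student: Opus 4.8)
The plan is to deflate the statement entirely to the fact that weak equivalence of ideals of $\Z[\beta]$ is an equivalence relation, and then transport this back along the ideal-class correspondence using Theorem \ref{2blockequivalence}. Fix the irreducible $f(t)$ and the field $K=\Q(\beta)$; all matrices under consideration lie in the single conjugacy-invariant family with characteristic polynomial $f(t)$, and all associated ideals are $\Z[\beta]$-fractional ideals. To each such $A$ is attached an ideal $I_A$, well defined up to arithmetic equivalence (different eigenvectors produce arithmetically equivalent ideals). Since arithmetic equivalence is contained in weak equivalence, the property ``$I_A$ is weakly equivalent to $I_B$'' does not depend on these choices. By Theorem \ref{2blockequivalence} (specifically (a)$\Leftrightarrow$(b)), $A$ and $B$ are $2$-block conjugate exactly when $I_A$ and $I_B$ are weakly equivalent. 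Thus $2$-block conjugacy is the pullback of the weak equivalence relation along $A\mapsto[I_A]$, and a pullback of an equivalence relation is an equivalence relation; so it suffices to check that weak equivalence of ideals is reflexive, symmetric, and transitive.

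For reflexivity, given a fractional ideal $I$ with coefficient ring $R=(I:I)$, take $X=Y=R$; then $IX=I$, $IY=I$, $XY=R$, so $I$ is weakly equivalent to itself. Symmetry is built into the definition: interchanging the pair $(I,X)$ with $(J,Y)$ carries the relations $IX=J$, $JY=I$, $XY=R$ to the same system with $I$ and $J$ swapped. For transitivity, suppose $I$ is weakly equivalent to $J$ via ideals $X_1,Y_1$ and order $R_1$, and $J$ is weakly equivalent to $L$ via ideals $X_2,Y_2$ and order $R_2$. Weak equivalence forces $R_1=(J:J)=R_2$; call this common order $R$. Put $X=X_1X_2$ and $Y=Y_2Y_1$. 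Then $IX=(IX_1)X_2=JX_2=L$, $LY=(LY_2)Y_1=JY_1=I$, and $XY=X_1(X_2Y_2)Y_1=X_1RY_1=X_1Y_1=R$, using that $X_1$ is an $R$-module. Hence $I$ is weakly equivalent to $L$.

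Combining these with Theorem \ref{2blockequivalence} yields the corollary. I do not expect a genuine obstacle here, since all the substance has already been absorbed into Theorem \ref{2blockequivalence}; the only point that deserves a moment's care is the matching of coefficient rings in the transitivity step, which is handled by the observation that each ideal has a unique coefficient ring and the middle ideal $J$ pins it down on both sides. (One could instead argue transitivity directly on the matrix side by composing $2$-block conjugacies and invoking Propositions \ref{kimpliesk+1} and \ref{weakequivalence} together with the reduction of $k$-block conjugacy to $2$-block conjugacy, but this merely retraces the proof of Theorem \ref{2blockequivalence}, so the ideal-theoretic route is cleaner.)
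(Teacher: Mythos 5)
Your route is the same as the paper's: the corollary is obtained by pulling $2$-block conjugacy back along Theorem \ref{2blockequivalence} to weak equivalence of the associated ideals, and the paper simply invokes the fact that weak equivalence is an equivalence relation (for which it refers to Dade--Taussky--Zassenhaus) rather than verifying it. Your well-definedness remark (the ideal of $A$ is determined up to arithmetic equivalence, and arithmetic equivalence is contained in weak equivalence) and your reflexivity and symmetry checks are fine.

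The one step that is not right as written is transitivity. With the paper's definition, the order $R$ witnessing $IX=J$, $JY=I$, $XY=R$ is \emph{not} forced to be the coefficient ring $(I:I)=(J:J)$: for instance $I=J=O_K$, $X=Y=\Z[\beta]$ satisfy the three equations with $R=\Z[\beta]$, while the coefficient ring is $O_K$. So the assertion ``weak equivalence forces $R_1=(J:J)=R_2$'' fails, and likewise ``$X_1$ is an $R$-module'' is not automatic from $X_1Y_1=R_1$ (e.g.\ $X=\Z+2i\Z$, $Y=\Z[i]$ give $XY=\Z[i]$ with $XR\neq X$). Fortunately the conclusion survives with a one-line repair: by commutativity of ideal multiplication, $XY=(X_1Y_1)(X_2Y_2)=R_1R_2$, and $R_1R_2$ is again an order (a unital subring of $O_K$ of full rank), so the triple $X=X_1X_2$, $Y=Y_2Y_1$, $R=R_1R_2$ already witnesses the weak equivalence of $I$ and $L$; alternatively, first replace each $X_i,Y_i$ by $SX_i,SY_i$ with $S$ the common coefficient ring, which preserves the defining equations and makes every witness an $S$-module, after which your computation goes through verbatim. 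With that correction your argument is complete and coincides in substance with the paper's.
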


Another consequence of Theorem \ref{2blockequivalence} is the following result, which is analogous to the Latimer-MacDuffee-Taussky Theorem.

\begin{corollary} There is a bijection between the $2$-block conjugacy classes of $GL_n(\Z)$ matrices with irreducible characteristic polynomial $f(t)$ and the weak equivalence classes of ideals of $\Z[\beta]$.
\end{corollary}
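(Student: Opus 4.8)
The plan is to show that the Latimer--MacDuffee--Taussky bijection descends to the coarser quotients on both sides. On the matrix side, conjugate matrices are $2$-block conjugate: viewing conjugacy as $1$-block conjugacy, this is Proposition \ref{kimpliesk+1} applied with $k=1$. Hence each $2$-block conjugacy class of $GL_n(\Z)$ matrices with characteristic polynomial $f(t)$ is a union of conjugacy classes. On the ideal side, arithmetically equivalent ideals are weakly equivalent (recorded in Section 2), so each weak equivalence class of ideals of $\Z[\beta]$ is a union of arithmetic equivalence classes. Thus both the matrices and the ideals are partitioned ever more coarsely — by conjugacy then $2$-block conjugacy, and by arithmetic equivalence then weak equivalence — and the Latimer--MacDuffee--Taussky theorem already gives a bijection at the finer level that is compatible with the association $A \mapsto I$.

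First I would define the candidate map $\Phi$ sending the $2$-block conjugacy class of $A$ to the weak equivalence class of an ideal $I$ associated to $A$, and check it is well defined. A different choice of eigenvector for $\beta$ changes $I$ only within its arithmetic equivalence class, hence not within its weak equivalence class; and if $B$ is $2$-block conjugate to $A$, then by Theorem \ref{2blockequivalence} (the implication (b) $\Rightarrow$ (a)) the associated ideals $I$ and $J$ are weakly equivalent, so they determine the same weak equivalence class. Therefore $\Phi$ is a well-defined map from $2$-block conjugacy classes to weak equivalence classes.

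Next I would verify $\Phi$ is a bijection. For surjectivity: given any ideal $J$ of $\Z[\beta]$, the Latimer--MacDuffee--Taussky theorem yields a matrix $B \in GL_n(\Z)$ with characteristic polynomial $f(t)$ whose associated ideal is arithmetically, hence weakly, equivalent to $J$; so $\Phi$ applied to the $2$-block conjugacy class of $B$ is the weak equivalence class of $J$. For injectivity: if $\Phi$ sends the $2$-block conjugacy classes of $A$ and $B$ to the same weak equivalence class, then the associated ideals $I$ and $J$ are weakly equivalent, so by Theorem \ref{2blockequivalence} (the implication (a) $\Rightarrow$ (b)) $A$ and $B$ are $2$-block conjugate, i.e.\ their classes coincide. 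This establishes the asserted bijection.

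I do not expect a genuine obstacle: the entire content is carried by Theorem \ref{2blockequivalence} together with the Latimer--MacDuffee--Taussky theorem, and the remaining argument is the routine fact that a bijection compatible with coarsenings on both sides induces a bijection of the coarsened sets. The only point deserving a moment's care is the well-definedness of $A \mapsto [I]$ at the level of weak equivalence classes, which is immediate once one recalls that arithmetic equivalence is contained in weak equivalence.
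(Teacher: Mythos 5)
Your proposal is correct and follows the paper's intended route: the paper states this corollary as an immediate consequence of Theorem \ref{2blockequivalence} combined with the Latimer--MacDuffee--Taussky correspondence, which is exactly the argument you spell out (well-definedness via (b)$\Rightarrow$(a), injectivity via (a)$\Rightarrow$(b), surjectivity via Latimer--MacDuffee--Taussky, with arithmetic equivalence contained in weak equivalence handling the choice of eigenvector). Your write-up simply makes explicit the routine descent-to-quotients argument the paper leaves implicit.
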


\begin{remark}{\rm 
For a weak equivalence class of ideals, with ring of coefficients $R$, there is a bijection between the arithmetic equivalence classes contained in it and the elements of the  class group of $R$, i.e., the arithmetic equivalence classes of invertible $R$-ideals. So, up to a point, the problem of conjugacy is reduced to the conjugacy of automorphisms associated to invertible ideals plus $2$-block conjugacy.
}\end{remark}

\section{Invariant tori for $B \oplus B$ and induced actions} A two-block conjugacy of two irreducible toral automorphisms presents a new platform on which to compare the dynamics of the automorphisms.  We will focus on the dynamical consequences of a conjugacy $(B\oplus B) M = M(A\oplus A^\prime)$, with similar results holding for the other conjugacy $(A\oplus A)N = N(B\oplus B^\prime)$ of $2$-block conjugacy.

A pair of complementary $(B\oplus B)$-invariant linearly embedded $n$-dimensional tori of ${\mathbb T}^{2n}$ are determined by the conjugating matrix $M\in GL_{2n}(\Z)$ in $(B\oplus B)M = M(A\oplus A^\prime)$. These two subtori are given by the  images of ${\mathbb T}^n$ by the $n\times 2n$ block columns
\[ \begin{pmatrix} M_1 \\ M_3 \end{pmatrix}, \ \begin{pmatrix} M_2 \\ M_4\end{pmatrix},\]
of the conjugating $M$. The dynamics of $B\oplus B$ on these invariant $n$-dimensional subtori are precisely the dynamics of $A$ and $A^\prime$ respectively.

A linearly embedded $n$-dimensional torus of ${\mathbb T}^{2n}$ is the image of many different linear embeddings. For a linear embedding $\begin{pmatrix} M_1 \\ M_3\end{pmatrix}$ and any $P \in GL_{n}(\Z)$, the linear embedding 
$\begin{pmatrix} M_1 \\ M_3\end{pmatrix} P$
has the same image as that of  $\begin{pmatrix} M_1 \\ M_3\end{pmatrix}$. In fact, all linear embeddings of $n$-dimensional tori in ${\mathbb T}^{2n}$ are of this form. To show this, suppose that $\begin{pmatrix} U_1 \\ U_3 \end{pmatrix}$ has the same image as that of $\begin{pmatrix} M_1 \\ M_3\end{pmatrix}$. By the Hermite Normal Form, there exist $M_2,M_4\in M_n(\Z)$ such that
\[ M = \begin{pmatrix} M_1 & M_2 \\ M_3 & M_4\end{pmatrix} \in GL_{2n}(\Z).\]
Set $W=M^{-1}$. Since the two linear embeddings have the same image, for each $\theta\in{\mathbb T}^n$ there is $\Theta\in{\mathbb T}^n$ such that
\[ W\begin{pmatrix} U_1 \\ U_3 \end{pmatrix}(\theta) = \begin{pmatrix}  \Theta \\ 0\end{pmatrix}.\]
The mapping $\theta \to \Theta$ is an automorphism of ${\mathbb T}^n$, so there is $P\in GL_n(\Z)$ such that
\[ W \begin{pmatrix} U_1 \\ U_3 \end{pmatrix} = \begin{pmatrix} P \\ 0 \end{pmatrix}.\]
Hence we have that
\[ \begin{pmatrix} U_1 \\ U_3 \end{pmatrix} = M \begin{pmatrix} P \\ 0 \end{pmatrix} = \begin{pmatrix} M_1 \\ M_3 \end{pmatrix} P.\]

We can identify the linearly embedded $n$-dimensional tori of ${\mathbb T}^{2n}$ which are $(B\oplus B)$-invariant in terms of a semiconjugacy. A linearly embedded $n$-dimensional torus $\begin{pmatrix} M _1 \\ M_3 \end{pmatrix} {\mathbb T}^n$ is $(B \oplus B)$-invariant when there exists an $A \in GL_{n}(\Z)$ such that $A$ is semiconjugate to $B\oplus B$ by the linear embedding $\begin{pmatrix} M_1 \\ M_3\end{pmatrix}$. That is, we have 
\[ (B\oplus B) \begin{pmatrix} M_1 \\ M_3\end{pmatrix} = \begin{pmatrix} M_1 \\ M_3\end{pmatrix} A.\]
In this case, we say that $B\oplus B$ induces $A$ on the $(B\oplus B)$-invariant $n$-dimensional subtorus $\begin{pmatrix} M_1 \\ M_3\end{pmatrix} {\mathbb T}^n$. For a different embedding $\begin{pmatrix} U_1 \\ U_3\end{pmatrix}$ with the same image as that of $\begin{pmatrix} M_1 \\ M_3\end{pmatrix}$, the action of $B\oplus B$ induces the conjugate $P^{-1}AP$ on the $(B\oplus B)$-invariant subtorus $\begin{pmatrix} U_1 \\ U_3\end{pmatrix} {\mathbb T}^n$.

Each linearly embedded $n$-dimensional torus of ${\mathbb T}^{2n}$ has a complementary linearly embedded $n$-dimensional torus. For a linear embedding $\begin{pmatrix} M_1 \\ M_3\end{pmatrix}$, there is by the Hermite Normal Form matrices $M_2,M_4\in M_n(\Z)$ such that
\[ \begin{pmatrix} M_1 & M_2 \\ M_3 & M_4 \end{pmatrix} \in GL_{2n}(\Z).\]
The linearly embedded $n$-dimensional torus $\begin{pmatrix} M_2 \\ M_4\end{pmatrix} {\mathbb T}^n$ is a complement of the linearly embedded $n$-dimensional torus $\begin{pmatrix} M_1 \\ M_3\end{pmatrix} {\mathbb T}^n$. Even if $\begin{pmatrix} M_1 \\ M_3\end{pmatrix} {\mathbb T}^n$ is $(B\oplus B)$-invariant, it might not be that a complementary $\begin{pmatrix} M_2 \\ M_4\end{pmatrix} {\mathbb T}^n$ is $(B\oplus B)$-invariant.

\begin{definition} A $(B \oplus B)$-invariant linearly embedded $n$-dimensional torus in ${\mathbb T}^{2n}$ is invariantly complemented if there exist $M_{2}, M_{4} \in M_n(\Z)$ and $A,A^\prime \in GL_n(\Z)$ such that
\[ M = \begin{pmatrix} M_1 & M_2 \\ M_3 & M_4\end{pmatrix} \in GL_{2n}(\Z)\]
and
\[(B \oplus B)M=M (A \oplus A^\prime).\]
\end{definition}

Certainly, having a conjugacy $(B\oplus B)M =  M(A\oplus A^\prime)$ for $M\in GL_{2n}(\Z)$ and $A^\prime\in GL_n(\Z)$ (which follows from $A$ and $B$ being $2$-block conjugate) implies that the complement $\begin{pmatrix} M_2 \\ M_4\end{pmatrix} {\mathbb T}^n$ of the $(B\oplus B)$-invariant $\begin{pmatrix} M_1 \\ M_3\end{pmatrix} {\mathbb T}^n$ is also $(B\oplus B)$-invariant. However having a conjugacy
\[ (B\oplus B) M = M \hat A = M \begin{pmatrix} A & S \\ 0 & A^\prime \end{pmatrix}\]
with $S\ne 0$, implies that the complement $\begin{pmatrix} M_2 \\ M_4\end{pmatrix} {\mathbb T}^n$ of the $(B\oplus B)$-invariant $\begin{pmatrix} M_1 \\ M_3\end{pmatrix} {\mathbb T}^n$ is not $(B\oplus B)$-invariant. There are $B$ for which a $(B\oplus B)$-invariant linearly embedded $n$-dimensional torus is not invariantly complemented, as illustrated next.

\begin{example}{\rm Taken from \cite{MS2}, the $GL_3(\Z)$ matrices
\[ A = \begin{pmatrix} -1 & 2 & 0 \\ -1 & 1 & 1 \\ -8 & -6 & -23\end{pmatrix},\ B=\begin{pmatrix} 0 & 1 & 0 \\ -1 & 0 & 2 \\ -11 & -3 & 23\end{pmatrix},\]
both have the hyperbolic and irreducible $f(t) = t^3 - 23t^2 + 7t - 1$ as their characteristic polynomial. Associated to $A$ and $B$ are the ideals $I$ and $J$ with $\Z$-bases $(2,\beta+1,\beta^2+1)$ and $(1,\beta,(\beta^2+1)/2)$ respectively where $f(\beta) = 0$. The ideal J is actually an order $R$, lying above $\Z[\beta]$, and hence $J$ is invertible. The ideal $I$ has $R$ as its ring of coefficients, but is not invertible. Thus $I$ and $J$ are not weakly equivalent, and so $A$ and $B$ are not $2$-block conjugate by Theorem \ref{2blockequivalence}. But there is a semiconjugacy from $A$ to $B\oplus B$ by the linear embedding
\[ \begin{pmatrix} M_1 \\ M_3 \end{pmatrix} = \begin{pmatrix} 1 & 0 & 0 \\ -1 & 2 & 0 \\ 0 & 0 & 1 \\ -1 & 1 & 0 \\ 0  & -1 & 1 \\ -4 & -3 & 11 \end{pmatrix},\]
that is, we have
\[ (B\oplus B) \begin{pmatrix} M_1 \\ M_3\end{pmatrix} =  \begin{pmatrix} M_1 \\ M_3\end{pmatrix} A.\]
By Proposition \ref{nkconjugacy}, there exist $M\in GL_{2n}(\Z)$, $A^\prime\in GL_n(\Z)$, and $S\in M_n(\Z)$ such that
\[ M = \begin{pmatrix} M_1 & M_2 \\ M_3 & M_4\end{pmatrix} {\rm\ and\ } (B\oplus B)M = M \hat A = M \begin{pmatrix} A & S \\ 0 & A^\prime\end{pmatrix}.\]
The matrix $S$ can never be $0$ because if it were for some choice of $M_2$ and $M_4$, then by Proposition \ref{impliesweak}, the ideals $I$ and $J$ would be weakly equivalent. Thus the linearly embedded $n$-dimensional torus $\begin{pmatrix} M_1 \\ M_3\end{pmatrix} {\mathbb T}^n$ is not invariantly complemented.
}\end{example}

\subsection{Equivalence of embeddings and conjugacy} We present a characterization of when two irreducible toral automorphisms $A$ and $B$ that are $2$-block conjugate are actually conjugate. The setting for this characterization is the set of $(B\oplus B)$-invariant linearly embedded $n$-dimensional tori of ${\mathbb T}^{2n}$ that are invariantly complemented. We can identify this set with
\[ {\mathcal I} = \{M \in GL_{2 n}(\Z):M^{-1}(B \oplus B)M \, \mbox{ is block diagonal}\}.\] 
There is a right action of $GL_n(\Z)\times GL_n(\Z)$ on ${\mathcal I}$ given by $(P,Q)\cdot M = M(P\oplus Q)$ where $P,Q\in GL_n(\Z)$. This action accounts for all of the  different embeddings with the same images.

We consider the automorphisms of ${\mathbb T}^{2n}$ that preserve the set of $(B \oplus B)$-invariant linearly embedded $n$-dimensional tori that are invariantly complemented. Let $\mathcal{E}$ be the subset of $\xi\in GL_{2 n}(\Z)$ such that $\xi M\in {\mathcal I}$ and $\xi^{-1}M\in {\mathcal I}$ for all $M\in{\mathcal I}$. Note that ${\mathcal E}$ is a subset of ${\mathcal I}$. Each $M\in{\mathcal I}$ has associated to it $A,D\in GL_n(\Z)$ (that depend on $M$) such that $(B\oplus B)M = M(A\oplus D)$. An element $\xi \in GL_{2n}(\Z)$ belongs to ${\mathcal E}$ if for all $M\in {\mathcal I}$ there exist $A_1,D_1, A_2,D_2 \in GL_n(\Z)$ (depending on $M$ and $\xi$) such that
\[ (B\oplus B) \xi M = \xi M (A_1\oplus D_1), \ (B\oplus B)\xi ^{-1}M = \xi^{-1}M(A_2\oplus D_2).\]
The set ${\mathcal E}$ is a group because it contains inverses by definition, and for $\xi,\eta\in{\mathcal E}$ we have $(\xi\eta)M = \xi(\eta M)\in{\mathcal I}$ and $(\xi^{-1}\eta^{-1})M = \xi^{-1}(\eta^{-1} M)\in {\mathcal I}$ for all $M\in {\mathcal I}$.

The group $\mathcal{E}$ contains as a subgroup the centralizer
\[ {\mathcal C} = \{ U\in GL_{2n}(\Z): (B\oplus B)U = U(B\oplus B)\}.\]
Each $U \in \mathcal{C}$ maps every $(B\oplus B)$-invariant linearly embedding $n$-dimensional torus and its invariant complement to another such pair with the same induced actions because for $M\in {\mathcal I}$ we have
\[ (B\oplus B)U M = U (B\oplus B)M =  U M(A\oplus D).\]
Combining the left action of ${\mathcal C}$ on ${\mathcal I}$ with the right action of $GL_n(\Z)\times GL_n(\Z)$ on ${\mathcal I}$ gives the action of the group ${\mathcal C} \times (GL_n(\Z)\times GL_n(\Z))$ on ${\mathcal I}$ defined by
\[ M \mapsto UM(P\oplus Q).\]

\begin{proposition}
The set ${\mathcal I}$ of $(B \oplus B)$-invariant and invariantly complemented linearly embedded $n$-dimensional tori is partitioned by the conjugacy classes of the induced actions. The group action of $\mathcal{C}\times (GL_n(\Z)\times GL_n(\Z))$ on ${\mathcal I}$ preserves this partition and acts transitively in each element of the partition.
\end{proposition}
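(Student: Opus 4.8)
The plan is to make the partition explicit through an invariant and then to recognize the residual ambiguity of that invariant as the centralizer $\mathcal C$. For $M\in\mathcal I$, membership in $\mathcal I$ says precisely that $M^{-1}(B\oplus B)M$ is block diagonal; write $M^{-1}(B\oplus B)M = A\oplus D$. Since $M^{-1}(B\oplus B)M\in GL_{2n}(\Z)$ its diagonal blocks satisfy $\det(A)\det(D)=\pm1$, so $A,D\in GL_n(\Z)$, and because $f$ is irreducible each of $A,D$ has characteristic polynomial $f(t)$. Thus $M\mapsto \bigl([A],[D]\bigr)$, the \emph{ordered} pair of $GL_n(\Z)$-conjugacy classes of the two induced actions, is a well-defined map on $\mathcal I$, and I would take the asserted partition of $\mathcal I$ to be the partition into the fibres of this map. (Keeping the pair ordered --- distinguishing the block coming from columns $1,\dots,n$ from that of its complement --- matters: a generic $M\in\mathcal I$ induces non-conjugate $A$ and $D$, so the action cannot interchange the two blocks, and the coarser unordered-pair partition would make the transitivity statement false.)

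Next I would verify that the group action lands inside $\mathcal I$ and preserves this partition. For $U\in\mathcal C$ and $P,Q\in GL_n(\Z)$, using $U^{-1}(B\oplus B)U=B\oplus B$ one computes
\[
\bigl(UM(P\oplus Q)\bigr)^{-1}(B\oplus B)\bigl(UM(P\oplus Q)\bigr)=(P^{-1}\oplus Q^{-1})(A\oplus D)(P\oplus Q)=(P^{-1}AP)\oplus(Q^{-1}DQ).
\]
Hence $UM(P\oplus Q)\in\mathcal I$, and its induced actions $P^{-1}AP,\ Q^{-1}DQ$ are conjugate to $A,D$; so the action sends each fibre of $M\mapsto([A],[D])$ into itself.

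For transitivity within a fibre, suppose $M,M'\in\mathcal I$ have $M^{-1}(B\oplus B)M=A\oplus D$ and $M'^{-1}(B\oplus B)M'=A'\oplus D'$ with $[A]=[A']$ and $[D]=[D']$, say $A'=P^{-1}AP$, $D'=Q^{-1}DQ$ for $P,Q\in GL_n(\Z)$. Put $N=M(P\oplus Q)$. By the computation above (with $U=I$), $N^{-1}(B\oplus B)N=A'\oplus D'=M'^{-1}(B\oplus B)M'$, so $U:=M'N^{-1}\in GL_{2n}(\Z)$ satisfies $U(B\oplus B)U^{-1}=B\oplus B$, i.e.\ $U\in\mathcal C$; then $M'=UN=UM(P\oplus Q)$ lies in the orbit of $M$. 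This shows each fibre is a single orbit.

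I do not expect a genuine obstacle: the only step with content is the last one, and its content is the routine observation that once the right $GL_n(\Z)\times GL_n(\Z)$-action has been used to match the diagonal blocks of the two normal forms \emph{exactly}, the freedom relating two matrices conjugating $B\oplus B$ into the same block-diagonal form is by definition an element of $\mathcal C$. The rest is bookkeeping with facts already in place: that $\mathcal C$ is a group, that the left $\mathcal C$-action and the right $GL_n(\Z)\times GL_n(\Z)$-action commute, and that the diagonal blocks of any $M^{-1}(B\oplus B)M$, $M\in\mathcal I$, automatically lie in $GL_n(\Z)$ with characteristic polynomial $f(t)$. The one point worth stating carefully in the writeup is the ordered-pair convention flagged above.
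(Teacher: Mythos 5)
There is a genuine gap, and it lies exactly where you chose your definition of the partition. You take the partition of ${\mathcal I}$ to be the fibres of $M\mapsto([A],[D])$, the \emph{ordered pair} of conjugacy classes of both diagonal blocks. With that reading, transitivity is indeed the routine normal-form matching you carry out (your computations are correct). But the partition intended in the paper --- and the one actually used afterwards --- is by the conjugacy class of the induced action on the embedded torus alone, i.e.\ by $[A]$, the class of the first diagonal block; the complement and its induced action $D$ are auxiliary data, not part of the invariant. The substantive content of the proposition is precisely that this coarser partition already has the transitivity property, equivalently that $[D]$ is \emph{forced} by $[A]$ (and $B$): if $(B\oplus B)M=M(A\oplus D)$ and $(B\oplus B)V=V(A\oplus D')$, then $D$ and $D'$ are conjugate. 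Your proposal never addresses this, so it proves a strictly weaker statement; your parenthetical about ordered versus unordered pairs argues against a strawman rather than against the reading the paper needs. In particular, the corollary that follows (that $2$-block conjugate $A,B$ are conjugate iff some $U\in{\mathcal C}$ carries $\begin{pmatrix}M_1\\ M_3\end{pmatrix}{\mathbb T}^n$ to $\begin{pmatrix}I_n\\ 0\end{pmatrix}{\mathbb T}^n$) requires that $M$ and $I_{2n}$ lie in a single orbit as soon as $A\approx B$, which your fibre-by-fibre transitivity does not give without first knowing the complementary actions must then also be conjugate to $B$.

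The missing step is number-theoretic, not linear-algebraic, and it is the bulk of the paper's proof. From $(B\oplus B)M=M(A\oplus D)$ and $(B\oplus B)V=V(A\oplus D')$ one gets (via Remark \ref{samewe} and the method of Proposition \ref{impliesweak}) weak equivalence of the ideal $J$ of $B$ with the ideals $F,F'$ of $D,D'$, with $G=(F:J)$ and $G'=(F':J)$ invertible, $JG=F$, $JG'=F'$. Writing $X=(J:I)=b_1R+b_2R$ (which depends only on $A$ and $B$) and $G=c_1R+c_2R$ with $c_1,c_2$ read off from the blocks $M_2,M_4$, the relation $W_1M_2+W_2M_4=0$ yields $b_1c_1+b_2c_2=0$ and hence $c_1X=b_2G$, so $X$ and $G$ are arithmetically equivalent; likewise $X$ and $G'$, whence $F=JG$ and $F'=JG'$ are arithmetically equivalent and $D\approx D'$ by the Latimer--MacDuffee--Taussky correspondence. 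Only after this can one adjust $V$ by $I_n\oplus Q$ and conclude with the elementary step you did prove, namely $U=VM^{-1}\in{\mathcal C}$. So your argument should be kept as the easy half, but the identification of the two partitions --- equivalently, the rigidity of the complementary induced action --- must be supplied.
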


\begin{proof} Suppose for $M,V\in{\mathcal I}$ with first block columns $\begin{pmatrix}M_1 \\ M_3\end{pmatrix}$ and $\begin{pmatrix} V_1 \\ V_3\end{pmatrix}$ that the induced actions of $B\oplus B$ on $\begin{pmatrix}M_1 \\ M_3\end{pmatrix} {\mathbb T}^n$ and $\begin{pmatrix} V_1 \\ V_3\end{pmatrix} {\mathbb T}^n$ are $A$ and $C$ respectively. If $A$ and $C$ are not conjugate, then there is no $(U,P,Q) \in {\mathcal C} \times (GL_n(\Z)\times GL_n(\Z))$ such that $UM(P\oplus Q) = V$ because $U$ does not change the induced action, and $P$ conjugates $A$. If $A$ and $C$ are conjugate, then we can replace $V$ with $I_{2n}V(P\oplus I_n)$ for some $P\in GL_n(\Z)$ so that $B\oplus B$ induces $A$ on $\begin{pmatrix} V_1 \\ V_3\end{pmatrix} {\mathbb T}^n$. Thus, we suppose that
\[ (B\oplus B) \begin{pmatrix} M_1 \\ M_3 \end{pmatrix} = \begin{pmatrix} M_1 \\ M_3 \end{pmatrix}  A,\]
and
\[ (B\oplus B) \begin{pmatrix} V_1 \\ V_3 \end{pmatrix} = \begin{pmatrix} V_1 \\ V_3 \end{pmatrix} A.\]

The existence of $U \in \mathcal{C}$ for which $U \begin{pmatrix} M_1 \\ M_3\end{pmatrix} = \begin{pmatrix} V_1 \\ V_3 \end{pmatrix}$ is implied by the existence of $(B\oplus B)$-invariant complements for the two subtori such that the induced actions of $B\oplus B$ on the complements are the same. Indeed, if there exist $M_2,M_4,V_2,V_4 \in M_n(\Z)$ and $D \in GL_n(\Z)$ such that 
\[ M = \begin{pmatrix} M_1 & M_2 \\ M_3 & M_4\end{pmatrix} \in GL_{2n}(\Z), \ V=\begin{pmatrix} V_1 & V_2 \\ V_3 & V_4\end{pmatrix} \in GL_{2n}(\Z),\]
and
\[ (B\oplus B)M = M(A\oplus D),\ (B\oplus B)V = V(A\oplus D)\]
(these last two equations meaning that $M,V\in{\mathcal I}$), then $U=VM^{-1} \in {\mathcal C}$ and satisfies $U \begin{pmatrix} M_1 \\ M_3\end{pmatrix} = \begin{pmatrix} V_1 \\ V_3 \end{pmatrix}$.

Conversely, the existence of $U\in{\mathcal C}$ for which $U \begin{pmatrix} M_1 \\ M_3\end{pmatrix} = \begin{pmatrix} V_1 \\ V_3 \end{pmatrix}$ implies the existence of $(B\oplus B)$-invariant complements for the two subtori such that the induced actions of $B\oplus B$ on the complements are the same. Take any $M_2,M_4 \in M_n(\Z)$ such that
\[ M = \begin{pmatrix} M_1 & M_2 \\ M_3 & M_4\end{pmatrix} \in GL_{2n}(\Z) \cap {\mathcal I}.\]
Then $M$ satisfies $(B\oplus B)M = M(A\oplus D)$ for some $D \in GL_n(\Z)$. The matrix $V=UM\in GL_{2n}(\Z)$, its first block column is $\begin{pmatrix} V_1 \\ V_3\end{pmatrix}$, and because
\[ (B\oplus B)V = (B\oplus B)UM = U(B\oplus B)M = UM(A\oplus D) = V(A\oplus D)\]
we have $V\in {\mathcal I}$.
  
It remains to show the existence of $(B\oplus B)$-invariant complements for the two subtori such that the induced actions of $B\oplus B$ on the complements are the same. Since the two $(B\oplus B)$-invariant subtori $\begin{pmatrix} M_1 \\ M_3\end{pmatrix} {\mathbb T}^n$ and $\begin{pmatrix} V_1 \\ V_3\end{pmatrix} {\mathbb T}^n$ are invariantly complemented, there are $M_2,M_4,V_2,V_4\in M_n(\Z)$ and $D,D^\prime \in GL_n(\Z)$ such that
\[ M = \begin{pmatrix} M_1 & M_2 \\ M_3 & M_4\end{pmatrix} \in GL_n(\Z),\ V = \begin{pmatrix} V_1 & V_2 \\ V_3 & V_4\end{pmatrix} \in GL_n(\Z),\]
and
\[(B \oplus B)M = M(A \oplus D),\ (B \oplus B)V=V(A \oplus D').\]
These conjugacies imply by Remark \ref{samewe} the weak equivalence of the ideals $J$ and $F$ associated to $B$ and $D$ respectively, and the weak equivalence of the ideals $J$ and $F^\prime$ associated to $B$ and $D^\prime$ respectively. Mimicking the proof of Proposition \ref{impliesweak}, the ideal $G=(F:J)$ is invertible and satisfies $JG=F$, and the ideal $G^\prime = (F^\prime:J)$ is invertible and satisfies $JG^\prime = F^\prime$. With $v=(v_1,\dots,v_n)^t$ a $\Z$-basis for $J$ and $w=(w_1,\dots,w_n)^t$ a $\Z$-basis for $G$, there are $c_1,c_2\in (F:J)$ such that $M_2w=c_1v$ and $M_4 w = c_2 v$ because $BM_2=M_2D$ and $BM_4=M_4D$. The constants $c_1$ and $c_2$ generate $G$, i.e., $G=c_1R+c_2R$, where $R$ is the common ring of coefficients of the weakly equivalent ideals. With $u=(u_1,\dots,u_n)^t$ a $\Z$-basis for the ideal $I$ associated to $A$, the ideal $X=(J:I)$ does not depend on $M$ or
\[ W = M^{-1} = \begin{pmatrix} W_1 & W_2 \\ W_3 & W_4\end{pmatrix},\]
but the constants $b_1,b_2\in(J:I)$ in $X=b_1R+b_2R$ determined by $W_1 v = b_1u$ and $W_2 v = b_2 u$ do depend on $W$. Because $WM=I_{2n}$, we have $W_1M_2+W_2M_4=0$, and so
\[ 0 = (W_1M_2+W_2M_4)w = (b_1c_1+b_2c_2)u,\]
implying that $b_1c_1+b_2c_2=0$. Then
\begin{align*} c_1X 
& = c_1(b_1R+b_2R) \\
& = c_1b_1 R + c_1b_2 R \\
& = -c_2b_2R + c_1b_2R \\
& = b_2(-c_2R+c_1R) = b_2G.
\end{align*}
Thus $X$ and $G$ are arithmetically equivalent ideals. In a similar manner, the ideals $X$ and $G^\prime$ are arithmetically equivalent. Then $F=JG$ and $F^\prime = JG^\prime$ are arithmetically equivalent ideals. Hence $D$ and $D^\prime$ are conjugate, so there is $Q\in GL_n(\Z)$ such that $D = Q D^\prime Q^{-1}$. Multiplying $V$ on the right by $I_n\oplus Q$ gives a new $\tilde V\in GL_n(\Z)$ such that $(B\oplus B) \tilde V = \tilde V(A\oplus D)$.
\end{proof}

\begin{remark}{\rm  An argument similar to the one above that shows the arithmetic equivalence of the invertible ideals $X=(J:I)$ and $G=(F:J)$, also shows the arithmetic equivalence of the invertible ideals $Y=(I:J)$ and $H=(J:F)$, where $XY=R$ and $GH=R$.
}\end{remark}

One element of the partition of ${\mathcal I}$ is the equivalence class represented by the conjugacy class of $B$. This is because the torus $\begin{pmatrix} I_n \\ 0 \end{pmatrix} {\mathbb T}^n$ is $(B\oplus B)$-invariant with induced action $B$, and it has a complement $\begin{pmatrix} 0 \\ I_n\end{pmatrix} {\mathbb T}^n$ that is $(B\oplus B)$-invariant. If $(B\oplus B)M = M(A\oplus D)$ for some $A,D\in GL_n(\Z)$ and
\[ M = \begin{pmatrix} M_1 & M_2 \\ M_3 & M_4\end{pmatrix} \in GL_{2n}(\Z),\]
then the induced action of $B\oplus B$ on $\begin{pmatrix} M_1 \\ M_3\end{pmatrix} {\mathbb T}^n$ is $A$, and another element of the partition of ${\mathcal I}$ is the equivalence class represented by the conjugacy class of $A$. The two equivalence classes represented by the conjugacy classes of $B$ and $A$ are either disjoint, or the same. If disjoint, then $A$ and $B$ are not conjugate. If the same, then by the transitivity of ${\mathcal C}\times(GL_n(\Z)\times GL_n(\Z))$ on each equivalence class of ${\mathcal I}$, we have that $A$ and $B$ are conjugate. Thus we have proved the following which gives a characterization of when $2$-block conjugate $A$ and $B$ are $1$-block conjugate.

\begin{corollary} For $A,B\in GL_n(\Z)$, suppose there exist $M\in GL_{2n}(\Z)$ and $D \in GL_n(\Z)$ such that $(B\oplus B)M = M(A\oplus D)$ and let $\begin{pmatrix} M_1 \\ M_3\end{pmatrix}$ be the first block column of $M$. Then $A$ and $B$ are conjugate if and only if there exists $U \in \mathcal{C}$ mapping $\begin{pmatrix} M_1 \\ M_3\end{pmatrix} {\mathbb T}^n$ to $\begin{pmatrix} I_n \\ 0\end{pmatrix} {\mathbb T}^n$.
\end{corollary}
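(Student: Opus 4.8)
The plan is to deduce the Corollary from the Proposition immediately preceding it, using the already-established identification of the partition of $\mathcal I$ with conjugacy classes of induced actions. First I would record the two relevant elements of the partition: the equivalence class $[B]$, witnessed by the element $I_{2n}\in\mathcal I$ (since $\begin{pmatrix} I_n \\ 0\end{pmatrix}{\mathbb T}^n$ is $(B\oplus B)$-invariant with induced action $B$ and has the $(B\oplus B)$-invariant complement $\begin{pmatrix} 0 \\ I_n\end{pmatrix}{\mathbb T}^n$, so that $(B\oplus B)I_{2n}=I_{2n}(B\oplus B)$), and the equivalence class $[A]$, witnessed by the given $M\in\mathcal I$ (since $(B\oplus B)M=M(A\oplus D)$ shows $M\in\mathcal I$ with induced action $A$ on $\begin{pmatrix} M_1 \\ M_3\end{pmatrix}{\mathbb T}^n$).

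Next I would argue both directions. For the ``if'' direction, suppose there is $U\in\mathcal C$ with $U\begin{pmatrix} M_1 \\ M_3\end{pmatrix}{\mathbb T}^n = \begin{pmatrix} I_n \\ 0\end{pmatrix}{\mathbb T}^n$. Since $U$ commutes with $B\oplus B$ it carries the $(B\oplus B)$-invariant subtorus $\begin{pmatrix} M_1 \\ M_3\end{pmatrix}{\mathbb T}^n$, on which $B\oplus B$ induces $A$, to $\begin{pmatrix} I_n \\ 0\end{pmatrix}{\mathbb T}^n$, on which $B\oplus B$ induces $B$; since $U$ intertwines the two induced actions up to a change of coordinates (precisely, a $GL_n(\Z)$ base change on the $n$-torus), $A$ and $B$ are conjugate. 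Equivalently, this says the class of $M$ and the class of $I_{2n}$ in the partition coincide, which forces $A\approx B$ by the identification of the partition with conjugacy classes of induced actions.

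For the ``only if'' direction, suppose $A\approx B$. Then the equivalence classes of ${\mathcal I}$ represented by the conjugacy classes of $A$ and of $B$ are the same class, so $M$ and $I_{2n}$ lie in the same element of the partition. By the transitivity half of the preceding Proposition, the group ${\mathcal C}\times(GL_n(\Z)\times GL_n(\Z))$ acts transitively on that element, so there exist $U\in\mathcal C$ and $P,Q\in GL_n(\Z)$ with $U M(P\oplus Q)=I_{2n}$. Then the first block column satisfies $U\begin{pmatrix} M_1 \\ M_3\end{pmatrix}P = \begin{pmatrix} I_n \\ 0\end{pmatrix}$, and right multiplication by $P\in GL_n(\Z)$ does not change the image torus, so $U$ maps $\begin{pmatrix} M_1 \\ M_3\end{pmatrix}{\mathbb T}^n$ onto $\begin{pmatrix} I_n \\ 0\end{pmatrix}{\mathbb T}^n$, as required.

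The only real point requiring care — and the step I expect to be the main obstacle — is making sure that in the ``only if'' direction the transitive action produces a $U$ that actually sends the embedded torus $\begin{pmatrix} M_1 \\ M_3\end{pmatrix}{\mathbb T}^n$ to $\begin{pmatrix} I_n \\ 0\end{pmatrix}{\mathbb T}^n$ rather than merely relating $M$ to $I_{2n}$ up to the right $GL_n(\Z)\times GL_n(\Z)$ action; this is resolved by observing that the right factor $P\oplus Q$ only reparametrizes the two complementary subtori and leaves their images (and hence the assertion about $U$) intact, together with the earlier observation that $\begin{pmatrix} M_1 \\ M_3\end{pmatrix}P$ and $\begin{pmatrix} M_1 \\ M_3\end{pmatrix}$ have the same image. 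Beyond that, everything is bookkeeping with the identifications already set up before the Proposition.
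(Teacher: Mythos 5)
Your proposal is correct and follows essentially the same route as the paper: it deduces the corollary from the preceding proposition by comparing the two partition classes of ${\mathcal I}$ represented by $I_{2n}$ (induced action $B$) and $M$ (induced action $A$), using transitivity of the ${\mathcal C}\times(GL_n(\Z)\times GL_n(\Z))$-action for one direction and the fact that elements of ${\mathcal C}$ preserve the conjugacy class of the induced action for the other. Your explicit handling of the right $P\oplus Q$ factor is just a spelled-out version of the paper's observation that reparametrizing an embedding does not change its image.
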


\begin{remark}{\rm  Each of the $n\times n$ blocks in
\[ U = \begin{pmatrix} U_1 & U_2 \\ U_3 & U_4\end{pmatrix} \in {\mathcal C}\]
commutes with $B$. For the eigenvector $v$ of $B$ corresponding to the eigenvalue $\beta$, we have for each $i=1,2,3,4$ that $U_i v = \alpha_i v$ for some $\alpha_i \in R = (J:J)$. More precisely, we have that $\alpha_i = p(\beta)$ where $U_i = p(B)$ because $B$ is irreducible (see \cite{BR}). Thus we have $U_iU_j = U_jU_i$ for all $i\ne j$ which implies that
\[ {\rm det}(U) = {\rm det}(U_1U_4 - U_2U_3)\]
(see \cite{Sil}). Furthermore we have
\[ (U_1U_4 - U_2U_3)v = (\alpha_1\alpha_4 - \alpha_2\alpha_3)v,\]
so that $\alpha_1\alpha_4-\alpha_2\alpha_3$ is a unit in $R$ if and only if ${\rm det}(U) = \pm 1$. We may thus identify ${\mathcal C}$ with $GL_2(R)$.
}\end{remark}

\subsection{Relationship with a Galois Group} We detail additional properties of the elements of ${\mathcal E}$ that lead to the relationship of ${\mathcal E}$ with the Galois group $Gal(K/{\mathbb Q})$. Recall that to each $M\in{\mathcal I}$ there is associated $A,D\in GL_n(\Z)$, depending on $M$, such that
$(B\oplus B)M = M(A\oplus D)$, and to each $\xi \in{\mathcal I}$ there is associated $A_1,D_1\in GL_n(\Z)$, depending on $M$ and $\xi$, such that $(B\oplus B)\xi M = \xi M(A_1\oplus D_1)$. By taking $M=I_{2n}$, there is for each $\xi\in {\mathcal E}$ the existence of $A_\xi,D_\xi\in GL_n(\Z)$ such that $(B\oplus B)\xi = \xi (A_\xi \oplus D_\xi)$. For an arbitrary $M\in{\mathcal I}$ we have
\[ \xi(A_\xi\oplus D_\xi)M = (B\oplus B)\xi M = \xi M(A_1\oplus D_1).\]
This gives the dependence of $A_1$ and $D_1$ on $A_\xi$ and $D_\xi$ and arbitrary $M\in{\mathcal I}$ by
\[ A_1\oplus D_1 = M^{-1}(A_\xi \oplus D_\xi)M\]
because $\xi$ and $M$ are invertible. For the matrix
\[ M = \begin{pmatrix} I & I \\ I & 2I\end{pmatrix}\]
which belongs to ${\mathcal C}$, and hence to ${\mathcal I}$, we have
\[ \begin{pmatrix} A_1 & D_1 \\ A_1 & 2D_1\end{pmatrix} = M(A_1\oplus D_1) = (A_\xi \oplus D_\xi)M = \begin{pmatrix} A_\xi & A_\xi \\ D_\xi & 2D_\xi\end{pmatrix}.\]
This implies that $A_\xi = D_\xi$, and hence the dependence of $A_1$ and $D_1$ on $\xi$ for arbitrary $M\in{\mathcal I}$ becomes
\[ A_1\oplus D_1 = M^{-1}(A_\xi \oplus A_\xi) M.\]
For the matrix
\[ M = \begin{pmatrix} B^2 & 0 \\ B & B^{-1}\end{pmatrix},\]
which belongs to ${\mathcal C}$, and hence to ${\mathcal I}$, we have
\[ \begin{pmatrix} B^2 A_1 & 0 \\ BA_1 & B^{-1}D_1\end{pmatrix} = M(A_1\oplus D_1) = (A_\xi \oplus A_\xi) M = \begin{pmatrix} A_\xi B^2 & 0 \\ A_\xi B & A_\xi B^{-1}\end{pmatrix}.\]
These implies that
\[ A_\xi B = B A_1 = B^{-1}(B^2 A_1) = B^{-1}(A_\xi B^2),\]
hence that $BA_\xi = A_\xi B$. By the irreducibility of $B$ we have that $A_\xi = p_{\xi}(B)$ for a polynomial $p_{\xi}(t)$ with rational coefficients, and so $ A_{\xi} v = p_{\xi }(\beta)v$. Since $(B\oplus B)\xi = \xi(A_\xi \oplus A_\xi)$, the matrix $A_{\xi}$ is similar (over $\Q$) to $B$, and so the quantity  $p_{\xi}(\beta)$ is also an eigenvalue of $B$. Hence the polynomial $p_{\xi}(t) \in \Q[t]$ represents the element $p_{\xi}(\beta)$ of $K=\Q(\beta)$, as well as a $\Q$-automorphism of $K=\Q[t]/(f(t))$, i.e., an element $\phi_{\xi}$ of $Gal(K/\Q)$ defined by
\[ \phi_{\xi}(\alpha)=\alpha \circ p_{\xi},\]
where we identify the element $\alpha \in K$ with one of its polynomial representations. 

We establish that the map $\xi \mapsto \phi_\xi$ from ${\mathcal E}$ to $Gal(K/\Q)$ is a group antihomomorphism and determine it kernel. An element $\xi \in {\mathcal E}$ satisfies $(B\oplus B)\xi = \xi(A_\xi\oplus A_\xi)$ where $A_\xi = p_\xi(B)$ is similar to $B$ over ${\mathbb Q}$. One of the $n\times n$ blocks $\xi_i$ in
\[ \xi = \begin{pmatrix} \xi_1 & \xi_2 \\ \xi_3 & \xi_4\end{pmatrix}\]
is invertible and so we have the similarity $B\xi_i = \xi_i A_\xi$ for some $i$. This implies that $w=\xi_i v$ is an eigenvector of $B$ for an eigenvalue $\gamma=p_\xi(\beta)$ of $B$ because
\[ Bw = B(\xi_i v) = \xi_i(A_\xi v) = \xi_i (p_\xi(B) v) = \xi_i( p_\xi(\beta) v) = p_\xi(\beta) w.\]
For $\xi,\eta\in{\mathcal E}$, we have
\[ \eta\xi(\xi^{-1}(A_\eta\oplus A_\eta) \xi) = \eta(A_\eta\oplus A_\eta)\xi = (B\oplus B)\eta \xi,\]
and so
\[ A_{\eta\xi}\oplus A_{\eta\xi} = \xi^{-1}(A_\eta\oplus A_\eta)\xi.\]
From this we get $A_{\eta\xi} = \xi_i^{-1} A_\eta \xi_i$ for the same choice of $i$ as above. Then we have
\[ A_{\eta\xi}v = p_{\eta\xi}(B) v = p_{\eta\xi}(\beta) v\]
and
\[ A_{\eta\xi}v = \xi_1^{-1} A_\eta \xi_i v = \xi_i^{-1} p_\eta(B) w = \xi_i^{-1} p_{\eta}(\gamma)w = p_{\eta}(\gamma) v.\]
These imply that
\[ p_{\eta\xi}(\beta) = p_{\eta}(\gamma) = p_{\eta}(p_\xi(\beta)).\]
Then
\[ \big[ p_{\eta\xi}(B) - p_{\eta}(p_\xi(B))\big] v = \big[ p_{\eta\xi}(\beta) - p_{\eta}(p_\xi(\beta))]v = 0,\]
so that
\[ p_{\eta\xi}(B) = p_{\eta}(p_\xi(B)) = (p_{\eta}\circ p_{\xi}) (B).\]
A straightforward argument shows that the polynomials $p_{\eta\xi}$ and $p_\eta\circ p_\xi$ represent the same element of $\Q[t] / (f(t))$. Consequentially, modulo the ideal $(f(t))$, we have
\[ \phi_{\eta\xi}(\alpha) = \alpha \circ p_{\eta\xi} = \alpha \circ p_\eta \circ p_\xi = \phi_\xi (\alpha \circ p_\eta) = (\phi_\xi\circ \phi_\eta)(\alpha).\]
Finally, for $\xi \in {\mathcal E}$, any one of the four conditions $p_\xi(t) = t$, $\phi_{\xi}={\rm id}$, $A_{\xi}=B$, and $\xi \in {\mathcal C}$ implies the other three. Thus we have proved

\begin{proposition}
The map $\pi:\mathcal{E}\rightarrow Gal(K/\Q)$ given by $\pi(\xi)=\phi_{\xi}$ is a group antihomomorphism with kernel $\mathcal{C}$.
\end{proposition}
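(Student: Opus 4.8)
The plan is to verify the three requirements for the statement: that $\pi$ is well-defined, that it is a group antihomomorphism, and that its kernel is exactly $\mathcal{C}$. Much of the groundwork has already been laid in the discussion preceding the proposition. First I would recall that for each $\xi\in\mathcal{E}$ the discussion has produced $A_\xi\in GL_n(\Z)$ with $(B\oplus B)\xi=\xi(A_\xi\oplus A_\xi)$, shown that $BA_\xi=A_\xi B$, and hence by irreducibility of $B$ written $A_\xi=p_\xi(B)$ for some $p_\xi(t)\in\Q[t]$; the element $p_\xi(\beta)\in K$ is an eigenvalue of $B$, so $p_\xi$ represents a $\Q$-automorphism $\phi_\xi$ of $K=\Q[t]/(f(t))$. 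The well-definedness point is that although $p_\xi$ is only determined modulo $(f(t))$, the induced map $\phi_\xi$ on $K$ does not depend on the representative chosen, and $A_\xi=p_\xi(B)$ pins down $p_\xi(\beta)$ uniquely because $B$ is irreducible. I would state this cleanly as the first step.

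Second I would carry out the antihomomorphism computation, which the excerpt has essentially already done: for $\xi,\eta\in\mathcal{E}$ one gets $A_{\eta\xi}\oplus A_{\eta\xi}=\xi^{-1}(A_\eta\oplus A_\eta)\xi$, hence $A_{\eta\xi}=\xi_i^{-1}A_\eta\xi_i$ for an invertible block $\xi_i$ of $\xi$, and since $w=\xi_i v$ is an eigenvector of $B$ with eigenvalue $\gamma=p_\xi(\beta)$, evaluating on $v$ yields $p_{\eta\xi}(\beta)=p_\eta(\gamma)=p_\eta(p_\xi(\beta))$. Then $[p_{\eta\xi}(B)-(p_\eta\circ p_\xi)(B)]v=0$ forces $p_{\eta\xi}(B)=(p_\eta\circ p_\xi)(B)$, so $p_{\eta\xi}\equiv p_\eta\circ p_\xi \pmod{(f(t))}$, and therefore $\phi_{\eta\xi}(\alpha)=\alpha\circ p_{\eta\xi}=\alpha\circ p_\eta\circ p_\xi=\phi_\xi(\phi_\eta(\alpha))$, i.e. $\pi(\eta\xi)=\pi(\xi)\pi(\eta)$. (Here the ``antihomomorphism'' direction is exactly an artifact of the convention $\phi_\xi(\alpha)=\alpha\circ p_\xi$, composing polynomials in the opposite order to the composition of field automorphisms.) I would also note the minor bookkeeping check that the image lies in $Gal(K/\Q)$ and that $\pi(I_{2n})=\mathrm{id}$.

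Third I would identify the kernel. The excerpt already observes that the four conditions $p_\xi(t)=t$, $\phi_\xi=\mathrm{id}$, $A_\xi=B$, and $\xi\in\mathcal{C}$ are mutually equivalent; I would spell this small cycle out explicitly. If $\phi_\xi=\mathrm{id}$ then $p_\xi(\beta)=\beta$, so $p_\xi(B)-B$ kills the eigenvector $v$ and hence (the entries of $v$ being a $\Z$-basis of the associated ideal, as recorded in Section 2) $p_\xi(B)=B$, i.e. $A_\xi=B$; then $(B\oplus B)\xi=\xi(B\oplus B)$, so $\xi\in\mathcal{C}$. Conversely if $\xi\in\mathcal{C}$ then $(B\oplus B)\xi=\xi(B\oplus B)$ and comparing with $(B\oplus B)\xi=\xi(A_\xi\oplus A_\xi)$ gives $\xi(A_\xi\oplus A_\xi)=\xi(B\oplus B)$, whence $A_\xi=B$, $p_\xi(\beta)=\beta$, and $\phi_\xi=\mathrm{id}$. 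Therefore $\ker\pi=\mathcal{C}$.

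The substantive content has mostly been assembled in the paragraphs preceding the proposition, so the ``proof'' is largely an act of organizing those computations into the three claims. The only point requiring real care is the well-definedness: one must be scrupulous that $\phi_\xi\in Gal(K/\Q)$ is genuinely independent of all choices (the choice of polynomial representative $p_\xi$, and the choice of invertible block $\xi_i$ used in the antihomomorphism step), and that the passage from an identity of matrices applied to $v$ to an identity of matrices — which rests on the irreducibility of $f$ and the fact that the components of $v$ form a $\Z$-basis of $J$ — is invoked correctly each time. I expect that to be the only place where a reader might want more than the excerpt already provides; everything else is a transcription of the displayed computations.
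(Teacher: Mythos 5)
Your proposal is correct and is essentially the paper's own argument: the paper's ``proof'' is precisely the discussion preceding the proposition (the reduction to $A_\xi=p_\xi(B)$, the computation $p_{\eta\xi}(\beta)=p_\eta(p_\xi(\beta))$ via an invertible block $\xi_i$, and the cycle of equivalences identifying the kernel with $\mathcal{C}$), which you organize into well-definedness, the antihomomorphism identity, and the kernel computation. The extra care you flag (independence of the polynomial representative and of the chosen block, and passing from identities on $v$ to identities of integer matrices via Corollary \ref{detnot0} and the $\Z$-basis property) is exactly what the paper relies on tacitly, so no gap remains.
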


We do not know precisely what the image of $\pi$ is for a given $B$. But we do know of one sufficient condition by which ${\mathcal C}$ is a proper subgroup of ${\mathcal E}$, and hence the image of $\pi$ is not trivial.

\begin{proposition} If there exists $\xi \in GL_{2n}(\Z)$ such that $(B\oplus B)\xi = \xi(B^{-1}\oplus B^{-1})$, then $\xi \in {\mathcal E}\setminus{\mathcal C}$.
\end{proposition}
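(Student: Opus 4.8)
The plan is to check the two defining conditions ``$\xi\in{\mathcal E}$'' and ``$\xi\notin{\mathcal C}$'' directly, each reducing to a short manipulation of the hypothesis $(B\oplus B)\xi=\xi(B^{-1}\oplus B^{-1})$. The key observation is that this hypothesis says precisely that conjugation by $\xi$ sends $B\oplus B$ to its inverse: rearranging gives $\xi^{-1}(B\oplus B)\xi = B^{-1}\oplus B^{-1} = (B\oplus B)^{-1}$.

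To see $\xi\in{\mathcal E}$, I would take an arbitrary $M\in{\mathcal I}$ and write $M^{-1}(B\oplus B)M = A\oplus D$ with $A,D\in GL_n(\Z)$. Then
\[ (\xi M)^{-1}(B\oplus B)(\xi M) = M^{-1}\bigl(\xi^{-1}(B\oplus B)\xi\bigr)M = M^{-1}(B\oplus B)^{-1}M,\]
and hence
\[ (\xi M)^{-1}(B\oplus B)(\xi M) = \bigl(M^{-1}(B\oplus B)M\bigr)^{-1} = (A\oplus D)^{-1} = A^{-1}\oplus D^{-1},\]
which is block diagonal, so $\xi M\in{\mathcal I}$. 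Taking inverses of both sides of the hypothesis shows that $\xi^{-1}$ satisfies the very same relation $(B\oplus B)\xi^{-1} = \xi^{-1}(B^{-1}\oplus B^{-1})$, so the identical computation gives $\xi^{-1}M\in{\mathcal I}$ for every $M\in{\mathcal I}$. Hence $\xi\in{\mathcal E}$.

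To see $\xi\notin{\mathcal C}$, I would argue by contradiction: if $(B\oplus B)\xi = \xi(B\oplus B)$, then combining this with the hypothesis and cancelling the invertible matrix $\xi$ yields $B\oplus B = B^{-1}\oplus B^{-1}$, i.e.\ $B^2 = I_n$. This forces the minimal polynomial of $B$ to divide $t^2-1$, contradicting the irreducibility of the degree-$n$ characteristic polynomial $f(t)$ of $B$ (equivalently, contradicting hyperbolicity, which forbids the eigenvalues $\pm1$). Therefore $\xi\in{\mathcal E}\setminus{\mathcal C}$.

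I do not expect a genuine obstacle here; the only point requiring a moment's care is recording that $\xi^{-1}$ inherits the same intertwining relation as $\xi$, which is immediate upon inverting both sides. The conceptual content is simply that conjugation by $\xi$ inverts $B\oplus B$, hence turns each block-diagonal matrix $A\oplus D$ attached to an $M\in{\mathcal I}$ into the still-block-diagonal $A^{-1}\oplus D^{-1}$, while a matrix that conjugates $B\oplus B$ to its inverse cannot also commute with it.
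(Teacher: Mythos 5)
Your proof is correct and follows essentially the same route as the paper: pass to inverses to see that $\xi^{-1}$ satisfies the same intertwining relation, observe that for $M\in{\mathcal I}$ with $(B\oplus B)M=M(A\oplus D)$ one gets the block-diagonal $A^{-1}\oplus D^{-1}$ for $\xi M$ and $\xi^{-1}M$, and rule out $\xi\in{\mathcal C}$ because that would force $B=B^{-1}$. Your added justification that $B^2=I_n$ contradicts irreducibility (or hyperbolicity) merely spells out the paper's terse remark that $B^{-1}\neq B$.
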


\begin{proof} Suppose $(B\oplus B)\xi = \xi(B^{-1}\oplus B^{-1})$. Applying inverses to both sides gives
\[ \xi^{-1}(B^{-1}\oplus B^{-1}) = (B\oplus B)\xi^{-1}.\]
Then for arbitrary $M\in{\mathcal I}$ we have
\[ (B\oplus B)\xi M = \xi(B^{-1}\oplus B^{-1})M,\ (B\oplus B)\xi^{-1} M = \xi^{-1}(B^{-1}\oplus B^{-1})M.\]
For each $M$ there is $A,D\in GL_n(\Z)$ such that $(B\oplus B)M = M(A\oplus D)$ from which we have
\[ (B^{-1}\oplus B^{-1}) M = M(A^{-1}\oplus D^{-1}).\]
This implies that
\[ (B\oplus B)\xi M = \xi M(A^{-1}\oplus D^{-1}),\ (B\oplus B)\xi^{-1} M = \xi^{-1} M (A^{-1} \oplus D^{-1}).\]
Hence $\xi$ belongs to ${\mathcal E}$. But $\xi$ does not belong to ${\mathcal C}$ because $B^{-1}\ne B$.
\end{proof}

The existence of $\xi \in GL_{2n}(\Z)$ for some $B$ for which $(B\oplus B)\xi = \xi(B^{-1}\oplus B^{-1})$ is not vacuous, as illustrated next.

\begin{example}\label{nontrivial}{\rm Consider the matrix
\[ B = \begin{bmatrix} 3 & 10 \\ 5 & 17\end{bmatrix}\]
with the irreducible hyperbolic characteristic polynomial $f(t) = t^2 - 20t + 1$. The inverse of $B$ is
\[ B^{-1} = \begin{bmatrix} 17 & -10 \\ -5 & 3\end{bmatrix}.\]
The $GL_{4}({\mathbb Z})$ matrix
\[ \xi = \begin{bmatrix} 5 & 0 & 0  & 2 \\ 7 & -5 & -1 & 0 \\ 0 & 24 & 5 & 14 \\ -12 & 0 & 0  &-5\end{bmatrix}\]
satisfies $(B\oplus B) \xi = \xi (B^{-1}\oplus B^{-1})$. Note that $\xi^2 = I_4$. For $K=\Q(\beta)$ where $f(\beta)=0$, the Galois group $Gal(K/\Q)$ is isomorphic to $\Z_2$, and so the antihomorphism $\pi$ is surjective with $\pi(\phi_\xi)$ being the nontrivial element of $Gal(K/\Q)$. 
}\end{example}

\begin{remark}{\rm The matrices $B$ and $B^{-1}$ in Example \ref{nontrivial} are not conjugate (see \cite{ATW}). By setting $A=B^{-1}$, we have another example of $A\oplus A \approx B\oplus B$ with $A$ and $B$ nonconjugate.
}\end{remark}

\section{Non-Block Conjugate Automorphisms} We explore what relationship there may be between ideals $I$ and $J$ associated respectively to non-block conjugate irreducible toral automorphisms $A$ and $B$. By Theorem \ref{2blockequivalence}, the ideals $I$ and $J$ are not weakly equivalent. This does not preclude the existence of a semiconjugacy from $A$ to $\bigoplus_{i=1}^k B$ by a linear embedding for some $k\geq 2$, and hence by Proposition \ref{nkconjugacy}, of a conjugacy between $\bigoplus_{i=1}^k B$ and
\[ \hat A =\begin{pmatrix} A & S \\ 0 & D\end{pmatrix}\]
for some integer $n\times n(k-1)$ matrix $S$ and some $D\in GL_{n(k-1)}(\Z)$. The existence of such a conjugacy may provide the means of constructing an explicit isomorphism between direct summands of ideals involving $I$ and $J$.

\subsection{Semiconjugacy by Linear Embedding} We prove the existence of a semiconjugacy from $A$ to $\bigoplus_{i=1}^k B$ by a linear embedding in a special case. We assume that $B$ is an automorphism $C_R$ corresponding to an order $J=R$ containing $\Z[\beta]$, and that $A$ is an automorphism corresponding to an ideal $I$ whose ring of coefficients is $R$. We set $R=w_{1}\Z+\cdots+w_{n}\Z$ for $w=(w_1,\dots,w_n)^t$ that satisfies $C_{R} w=\beta w$ and $I=u_{1}\Z+\cdots+u_{n}\Z$ where $u=(u_{1},\dots,u_{n})^{t}$ satisfies $A u=\beta u$. As an $R$-module, the ideal $I$ is finitely generated, which means there are finitely many $a_1,\dots,a_k\in R$ such that $I = a_1R + \cdots + a_kR$. Then there exist matrices $X_{i} \in \Lambda(C_{R},A)$, for $i=1,\dots,k$, satisfying
\[X_{i} u=a_{i} w.\]
Since $u_i\in I$ for $i=1,\dots,n$ and $I = a_1R+\cdots+a_kR$, there exist $n$-tuples $z_i\in R^n$, $i=1,\dots,k$, such that $u=\sum_{i=1}^{k}a_{i}z_{i}$. Since $R=w_1\Z + \cdots + w_n\Z$, there exist $Y_{i} \in M_{n}(\Z)$ such that
$z_{i}=Y_{i} w$. As a consequence, we get
\[\sum_{i=1}^{k}Y_{i}X_{i}u=u,\]
which implies that $\sum_{i=1}^{k}Y_{i}X_{i}=I_{n}$. By the Hermite Normal Form there exists  $M \in GL_{k n}(\Z)$ such that partitioning $M$ and $W=M^{-1}$ into $n\times n$ blocks,
\[M=(M_{ij}),\ W=(W_{ij}),\ i,j = 1,\cdots,k,\]
we have
\[M_{i1}=X_{i}\,\, \mbox{  and } W_{1j}=Y_{j}.\]
Then
\[ M^{-1}\left( \bigoplus_{i=1}^k C_R\right) M \begin{pmatrix} u \\ 0 \\ \vdots \\ 0 \end{pmatrix} 
= M^{-1}\begin{pmatrix} C_R X_1 u \\ C_R X_2 u \\ \vdots \\ C_R X_k u \end{pmatrix} 
= M^{-1}\begin{pmatrix} X_1 A u \\ X_2 A u \\ \vdots \\ X_k A u\end{pmatrix}
= \beta \begin{pmatrix} u \\ 0 \\ \vdots \\ 0 \end{pmatrix}.\]
This implies that
\[ M^{-1}\left( \bigoplus_{i=1}^k C_r\right) M = \hat A = \begin{pmatrix} A & S \\ 0 & D\end{pmatrix} \in GL_{kn}(\Z),\]
for an $n\times n(k-1)$ integer matrix $S$, and an $n(k-1)\times n(k-1)$ matrix $D$ belonging to $GL_{n(k-1)}(\Z)$. The conjugacy $M$ may be chosen so that $D$ is, for instance, upper block triangular. However $M$ is chosen, the automorphism $A$ is the action induced by $\bigoplus_{i=1}^{k} C_{R}$ on the linearly embedded $n$-dimensional torus
\[ \begin{pmatrix} X_1 \\ X_2 \\ \vdots \\ X_k\end{pmatrix} {\mathbb T}^n \subset ({\mathbb T}^n)^k.\]
By Proposition \ref{nkconjugacy}, we have proved

\begin{proposition} If the ring of coefficients of an ideal $I$ associated to $A$ is $R$, and the number of generators for $I$ as an $R$-module is $k$, then there is a semiconjugacy from $A$ to $\bigoplus_{i=1}^k C_R$ by a linear embedding.
\end{proposition}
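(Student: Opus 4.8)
The plan is to manufacture the linear embedding directly from a set of $R$-module generators of $I$ and then quote Proposition~\ref{nkconjugacy}. First I would fix column eigenvectors with $Au=\beta u$ and $C_Rw=\beta w$, so that the entries of $u$ form a $\Z$-basis of $I$ and the entries of $w$ form a $\Z$-basis of $R$, and write $I=a_1R+\cdots+a_kR$ with $a_i\in R$. For each $i$, the inclusion $a_iR\subset I$ means the entries of $a_iw$ lie in $I$, hence are $\Z$-linear combinations of the entries of $u$; this defines an integer matrix $X_i$ by $X_iu=a_iw$. Applying $C_R$ and using $C_R(a_iw)=a_i\beta w$ gives $C_RX_iu=\beta X_iu=X_iAu$, and since the entries of $u$ are a $\Q$-basis of $K$ this forces $C_RX_i=X_iA$, i.e. $X_i\in\Lambda(C_R,A)$. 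The candidate linear embedding is the $kn\times n$ stack $(X_1;\cdots;X_k)$.

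Next I would record the identity that makes this stack unimodularly completable. Since the entries of $u$ lie in $I=\sum_i a_iR$, we may write $u=\sum_{i=1}^k a_iz_i$ with $z_i\in R^n$, and then $z_i=Y_iw$ for integer matrices $Y_i$ because the entries of $w$ span $R$ over $\Z$. Substituting back gives $u=\sum_i Y_ia_iw=\sum_i Y_iX_iu$, so $\sum_{i=1}^k Y_iX_i=I_n$. By the Cauchy–Binet formula this identity forces the greatest common divisor of the $n\times n$ minors of $(X_1;\cdots;X_k)$ to be $1$, so by the Hermite Normal Form the stack is the first block column of some $M\in GL_{kn}(\Z)$.

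With such an $M$ in hand I would identify $\hat A:=M^{-1}\big(\bigoplus_{i=1}^k C_R\big)M$ by evaluating its first block column on $(u,0,\dots,0)^t$. We have $M(u,0,\dots,0)^t=(X_1u,\dots,X_ku)^t=(a_1w,\dots,a_kw)^t$; applying $\bigoplus_{i=1}^k C_R$ and using $C_RX_i=X_iA$ turns this into $(X_1Au,\dots,X_kAu)^t=M(Au,0,\dots,0)^t$, so $\hat A(u,0,\dots,0)^t=(Au,0,\dots,0)^t$. Because an integer matrix annihilating $u$ must be zero, the $(i,1)$ blocks of $\hat A$ vanish for $i\geq2$ and the $(1,1)$ block equals $A$; thus $\hat A=\begin{pmatrix}A&S\\0&D\end{pmatrix}\in GL_{kn}(\Z)$ with $S$ an $n\times n(k-1)$ integer matrix and $D\in GL_{n(k-1)}(\Z)$ (its determinant is $\pm1$ since those of $\hat A$ and $A$ are). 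This is a conjugacy between $\bigoplus_{i=1}^k C_R$ and a matrix of the shape required in Proposition~\ref{nkconjugacy}, which therefore yields a semiconjugacy from $A$ to $\bigoplus_{i=1}^k C_R$ by the linear embedding $(X_1;\cdots;X_k)$.

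The module bookkeeping and the final block computation are routine; the only step that invokes a genuine tool is the Hermite Normal Form completion, and the point to watch is that the partition-of-unity identity $\sum_i Y_iX_i=I_n$, obtained by re-expanding $u$ in the generators $a_i$, is exactly what guarantees that $(X_1;\cdots;X_k)$ extends to an element of $GL_{kn}(\Z)$. So the crux is recognizing that a $k$-element $R$-generating set of $I$ simultaneously supplies the embedding and the relation that certifies its completability.
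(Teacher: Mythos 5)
Your proposal is correct and follows essentially the same route as the paper: define $X_i$ by $X_iu=a_iw$ from the $R$-generators of $I$, extract $\sum_i Y_iX_i=I_n$ by re-expanding $u$, complete the stack to $M\in GL_{kn}(\Z)$, verify the block upper-triangular form of $M^{-1}\bigl(\bigoplus_{i=1}^k C_R\bigr)M$ by evaluating on $(u,0,\dots,0)^t$, and invoke Proposition \ref{nkconjugacy}. The only cosmetic difference is that you justify the unimodular completion via Cauchy--Binet and coprimality of maximal minors, whereas the paper simply cites the Hermite Normal Form (and additionally arranges $W_{1j}=Y_j$, which is needed later in that subsection but not for this proposition).
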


%\begin{remark}The semiconjugacy from $A$ to $\bigoplus_{i=1}^k B$ by linear embedding
%\[ \bigoplus_{i=1}^k C_R \begin{pmatrix} X_1 \\ \vdots \\ X_k\end{pmatrix} =  \begin{pmatrix} X_1 \\ \vdots \\ X_k\end{pmatrix} A\]
%generalizes to any $\alpha\in R$. If we define $A_\alpha \in M_n(\Z)$ by $\alpha u = A_\alpha u$ and $B_\alpha\in M_n(\Z)$ by $\alpha w = B_\alpha w$, then we have the semiconjugacy by linear embedding
%\[ \bigoplus_{i=1}^k B_\alpha \begin{pmatrix} X_1 \\ \vdots \\ X_k\end{pmatrix} =  \begin{pmatrix} X_1 \\ \vdots \\ X_k\end{pmatrix} A_\alpha,\]
%where in general $A_\alpha$ and $\bigoplus_{i=1}^k B_\alpha$ are endomorphisms. If $\alpha = p(\beta)$ for a polynomial $p(t)$ with integer coefficients, then $A_\alpha = p(A)$ and $B_\alpha = p(C_R)$.
%\end{remark}

\subsection{Induced Maps on Direct Sums of Ideals} When the ideal $I$ is invertible, it is generated by one or two elements of $R$, and it is weakly equivalent to $R=(I:I)$. By the implied $2$-block conjugacy of $C_R$ and $A$, there exists an $R$-module isomorphism
\[ \phi: R \oplus R \rightarrow I \oplus (R:I)\]
that was described in Section 4. This confirms the well known result that an ideal of a commutative ring is invertible if and only if it is a projective module, i.e., a direct summand of a free module.

In the general setting, when $I$ is generated by $k$ elements of $R$, there is an isomorphism, not necessarily an $R$-module isomorphism, that relates $I$ and $R$. The $k$ generators $a_1,\dots,a_k$ of $I$ determine a $R$-module epimorphism
\[\psi:\bigoplus_{i=1}^{k} R\rightarrow I {\rm \ defined\ by\ } \psi(x_1,\dots,x_k)=\sum_{i=1}^{ k}a_{i}x_{i}.\]
The $R$-submodule ${\rm ker}(\psi)$ is part of the exact sequence
\[0\rightarrow {\rm ker}(\psi)\rightarrow \bigoplus_{i=1}^{k} R\rightarrow I\rightarrow 0.\]
This exact sequence splits, as a sequence of $R$-modules, if and only if $I$ is an invertible ideal. But this exact sequence always splits as a sequence of abelian groups, giving an isomorphism between $\bigoplus_{i=1}^{k} R$ and $I\oplus {\rm ker}(\psi)$.

We give an explicit construction of an isomorphism between $\bigoplus_{i=1}^k R$ and $I\oplus {\rm ker}(\psi)$ that is induced by the $GL_{nk}(\Z)$ matrix $M$ which conjugates $\bigoplus_{i=1}^k C_R$ and $\hat A$. The conjugating matrix $M$ gives the following commutative diagram.
\[ \begin{CD} \Z^{kn} @>{\bigoplus_{i=1}^k C_R}>> \Z^{kn} \\ @V M VV @VV M V \\ \Z^{kn} @>> \hat A > \Z^{kn}\end{CD}\]
Here, $\Z^{nk} = \bigoplus_{i=1}^k\Z^n$ are $k$-tuples $(m_1,m_2,\dots,m_k)$ of row vectors $m_i$ in $\Z^n$, and the automorphisms act on the right. We identify the two copies of $\Z^{kn}$ in the top row of the commutative diagram with $\bigoplus_{i=1}^k R$ by the isomorphism
\[ (m_1,m_2,\dots,m_k) \to (m_1 w,m_2 w,\dots, m_k w) = (x_1,x_2,\dots,x_k),\]
where each $m_i w$ is the scalar product. The right action of the automorphism $\bigoplus_{i=1}^k C_R$ on $\Z^{kn}$ represents multiplication by $\beta$ on $\bigoplus_{i=1}^k R$ because $m_iC_R w = \beta m_i w$ for each $i=1,\dots,k$. The two copies of $(\Z^n)^k$ in the bottom row of the commutative diagram we think of as $\Z^n\oplus (\Z^n)^{k-1}$, where the first summand $\Z^n$ we identify with $I$ by the isomorphism $s_1\to s_1u$ for $s_1\in \Z^n$. The right action of $A$ on $\Z^n$ represents multiplication by $\beta$ on $I$ because $s_1Au = \beta s_1u$. Extending the isomorphism $s_1\to s_1u$ from $\Z^n$ to $I$ to an isomorphism from $\Z^n \oplus (\Z^n)^{k-1}$ to $I\oplus {\rm ker}(\psi)$ requires obtaining an isomorphic copy of ${\rm ker}(\psi)$.

We explicitly compute ${\rm ker}(\psi)$, showing its dependence on the conjugating matrix $M$. Since $x_i = m_iw$, $X_i u = a_i w$, and $M_{i1} = X_i$, we have
\[ \sum_{i=1}^k a_i x_i = \left(\sum_{i=1}^k m_i M_{i1}\right) u.\]
For $(x_1,x_2,\dots,x_k)$ to belong to ${\rm ker}(\psi)$ requires that $\sum_{i=1}^k m_i M_{i1} = 0$ since the entries of $u$ are a $\Z$-basis for $I$. For arbitrary $s_1,s_2,\dots,s_k \in \Z^n$, we have
\[ \begin{pmatrix} s_1 & s_2 & \dots & s_k \end{pmatrix} M^{-1} \begin{pmatrix} M_{11} \\ M_{21} \\ \vdots \\ M_{k1}\end{pmatrix} = \begin{pmatrix} s_1 \\ 0 \\ \vdots \\ 0\end{pmatrix}.\]
Thus the $(m_1,m_2, \dots, m_k)$ for which $\sum_{i=1}^k m_i M_{i1} = 0$ are given by
\[ m_i = \sum_{j=1}^k s_j W_{ji},\ i=1,2,\dots, k,\]
where $s_1=0$ and $s_2,\dots,s_k\in \Z^n$ are arbitrary. By the identification of $\bigoplus_{i=1}^k R$ with $\Z^{kn}$, the kernel of $\psi$ is then generated over $\Z$ by the last $(k-1)n$ rows of $M^{-1} = W = (W_{ij})$, i.e.,
\[ {\rm ker}(\psi)=\left\{(x_1,\dots,x_k) \in \bigoplus_{i=1}^{k} R: x_{i}=\sum_{j=2}^{k}s_{j}W_{ji}w,\, (s_2,\dots,s_k) \in (\Z^{n})^{k-1}\right\}.\]

Now we can extend the isomorphism $s_1\to s_1 u$ from $\Z^n$ to $I$ to an isomorphism from $\Z^n \oplus(\Z^n)^{k-1}$ to $I\oplus {\rm ker}(\psi)$. For each $(x_1,x_2,\dots,x_k)\in {\rm ker}(\psi)$, the values of $x_2,\dots, x_k$ are determined uniquely by $s_2,\dots,s_k\in \Z^n$, and so the value of $x_1$ is uniquely determined by the values of $x_2,\dots,x_k$. Thus there is an isomorphism from ${\rm ker}(\psi)$ to the $R$-module of elements of the form
\[ \left( \sum_{j=2}^k s_jW_{j2} w, \dots, \sum_{j=2}^k s_j W_{jk}w\right)\]
given by the projection $(x_1,x_2,\dots,x_k)\to(x_2,\dots,x_k)$. By abuse of notation we denote by ${\rm ker}(\psi)$ this isomorphic copy of ${\rm ker}(\psi)$. We identify the two copies of $\Z^{k n}=\Z^{n}\oplus (\Z^{n})^{k-1}$ in the bottom row of the commutative diagram with $I \oplus {\rm ker}(\psi)$ by the isomorphism
\[(s_1,s_2,\dots,s_k)\in\Z^{n}\oplus (\Z^{n})^{k-1} \rightarrow \left(s_{1} u, \sum_{j=2}^{k}s_{j}W_{j2} w,\dots,\sum_{j=2}^k s_j W_{jk}w \right).\]

We show that the $GL_{n(k-1)}(\Z)$ matrix $D$ inside $\hat A$ represents multiplication by $\beta$ in ${\rm ker}(\psi)$.  Partition $D$ into $n\times n$ blocks $D_{ij}$ for $i,j = 2,\dots,k$. With $(x_2,\dots,x_k)\in {\rm ker}(\psi)$ corresponding to $(s_2,\dots,s_k)\in (\Z^n)^{k-1}$, and using the conjugacy between $\bigoplus_{i=1}^k C_R$ and $\hat A$, we have for each $i=2,\dots,k$ that
\[ \beta x_i = \sum_{j=2}^{k}s_{j}W_{j i}C_{R} w = \sum_{j=2}^{k}s_{j}\sum_{l=2}^k  D_{j l}W_{l i} w =
 \sum_{l=2}^{k} \left (\sum_{j=2}^{k}s_{j}D_{j l} \right) W_{l i}w.\]
The entries of
\[ (s_2^\prime, \dots, s_k^\prime) = \left( \sum_{j=2}^k s_j D_{j2},\dots,\sum_{j=2}^k s_j D_{jk}\right) \in (\Z^n)^{k-1}\]
are independent of $i$, and so
\[ (\beta x_2,\dots,\beta x_k) = \left(\sum_{l=2}^k s_l^\prime W_{l2}w,\dots,\sum_{l=2}^k s_l^\prime W_{lk}w\right) \in {\rm ker}(\psi).\]
If we set
\[ v_i = \begin{pmatrix} W_{2i} \\ \vdots \\ W_{k i}\end{pmatrix} w, \ i=2,\dots,k,\]
then $(\beta x_2,\dots,\beta x_k)$ is $s_2^\prime v_2 + s_3^\prime v_3 + \cdots + s_k^\prime v_k$. The $k-1$ vectors $v_2,\dots,v_k$ are eigenvectors of $D$ corresponding to $\beta$ because for each $i=2,\dots,k$ we have
\[ D v_i = \begin{pmatrix} \sum_{j=2}^k D_{2j}W_{ji}w \\ \sum_{j=2}^k D_{3j}W_{ji}w \\ \vdots \\ \sum_{j=2}^k D_{kj}W_{ji}w\end{pmatrix} = \begin{pmatrix} \sum_{j=2}^k W_{ji}C_R w \\ \sum_{j=2}^k W_{ji}C_R w \\ \vdots \\ \sum_{j=2}^k W_{ji}C_R w\end{pmatrix} = \begin{pmatrix} \beta \sum_{j=2}^k W_{ji}w \\ \beta \sum_{j=2}^k W_{ji}w \\ \vdots \\ \beta \sum_{j=2}^k W_{ji}w\end{pmatrix}  = \beta v_i.\]

We show that there is only one linear dependence relation on the set $v_1,v_2,\dots,v_k$, where
\[ v_1 = \begin{pmatrix} W_{21} \\ \vdots \\ W_{k1}\end{pmatrix} w,\]
and it is given by
\[\sum_{i=1}^{k}a_{i}v_{i}=\sum_{i=1}^{k}(W_{2\, i},\cdots,W_{k\, i})^{t}X_{i}u=0.\]
Suppose $\sum_{i=1}^{k}b_{i}v_{i}=0$. Multiplying this on the left by the matrix $[M_{j 2} \cdots M_{j k}]$, for any $j=1,\dots,k$, gives
\[b_{j}(I-X_{j}Y_{j})w-\sum_{i \neq j} b_{i}X_{j}Y_{i} w=0.\]
This simplifies to
\[b_{j} w=X_{j}\left(\sum_{i=1}^{k} b_{i}Y_{i}\right)w\]
for every $j$. Because
$X_{j} u=a_{j}w$ and $X_{j}$ is nonsingular, we have
\[\frac{b_{j}}{a_{j}} u= \left(\sum_{i=1}^{k} b_{i}Y_{i}\right)w,\]
for every $j$. Hence $b_j/a_j$ is a constant.

\begin{remark}{\rm In the case $k=2$, we may identify the kernel of $\psi$ with a fractional $R$-ideal, i.e., ${\rm ker}(\psi)=\{y(a_{2},-a_{1}): y \in (R:I)\}$.
}\end{remark}

The automorphism $\hat A$ induces, under the identifications defined above, an automorphism $\hat A^{*}$ of $I \oplus {\rm ker}(\psi)$, again only as an automorphism of an abelian group, but the relation of its action with the module structure goes far beyond that. For every
\[ \left(s_{1} u, \sum_{j=2}^{k}s_{j}W_{j 2} w, \dots,\sum_{j=2}^{k}s_{j}W_{j k} w \right)=(t,x_2,\dots,x_k) \in I \oplus {\rm ker}(\psi),\]
we have
\[\hat A^{*}(t,x_2,\dots,x_k)=(\beta t, \beta x_2+ \theta_{\beta}(t),\dots,\beta x_k + \theta_\beta(t) ),\]
where $\theta_{\beta}:I \rightarrow {\rm ker}(\psi)$ is explicitly given by
\[\theta_{\beta}(t)=-(s_{1} (A-\beta I_{n})Y_{1}w).\] 
Each element $y \in R$ is written as a polynomial $p(\beta)$, with rational coefficients, such that $p(C_R) \in M_{k n}(\Z)$. Hence, each $\theta_{\beta}$ determines a mapping $\mathcal{L}:R \rightarrow {\rm End}_{\Z}(I \oplus {\rm ker}(\psi))$
defined by
\[ \mathcal{L}(y)(t,x_2,\dots,x_k)=(y t, yx_2+\theta_{y}(t), \dots, yx_k + \theta_y(t) ),\]
where $\theta_{y}$ is the image of $y$ under $\theta:R \rightarrow {\rm Hom}_{\Z}(I, {\rm ker}(\psi))$, a homomorphism of abelian groups satisfying, for every $y,z \in R$,
\[\theta_{y z}(t)=\theta_{y}( z t)+y\theta_{z}(t),\] or, using the symmetry given by commutativity,
\[\theta_{y}(z t)-z \theta_{y}(t)=\theta_{z}(y t)-y \theta_{z}(t), \qquad \forall y, z \in R\,\, \forall t \in I.\]

\subsection{Another Semiconjugacy by Linear Embedding} We construct a conjugacy between $\bigoplus_{i=1}^k A^t$ and
\[ \hat C_R^t = \begin{pmatrix} C_R^t & T \\ 0 & E \end{pmatrix} \in GL_{nk}(\Z).\]
By Proposition \ref{nkconjugacy}, this implies the existence of a semiconjugacy from $C_R^t$ to $\bigoplus_{i=1}^k A^t$ by linear embedding, but not necessarily the existence of a semiconjugacy from $C_R$ to $\bigoplus_{i=1}^k A$ by a linear embedding.

To get the conjugacy we make use of any monogenic order $R_0$ contained in $R$. One possibility for $R_0$ is $\Z[\beta]$. We refer to the paper \cite{MS2} for a proof of the results on ideals that follow. We have $I (R_{0}:I)=(R_{0}:R)$, and so
\[I=((R_{0}:R):(R_{0}:I)).\]
Because $I=a_1R+\cdots+a_kR$, there is an $R$-module epimorphism
\[\psi^{*}:\bigoplus_{i=1}^{k} (R_{0}:I)\rightarrow (R_{0}:R) {\rm \ given\ by\ }\psi^{*}(y_1,\dots,y_k)=\sum_{i=1}^{ k}a_{i}y_{i},\]
and, as before, the exact sequence
\[0\rightarrow {\rm ker}(\psi^{*})\rightarrow \bigoplus_{i=1}^{k} (R_{0}:I)\rightarrow (R_{0}:R)\rightarrow 0\]
splits as a sequence of abelian groups, giving an isomorphism
\[ \bigoplus_{i=1}^k (R_0:I) \to (R_0:R)\oplus {\rm ker}(\psi^*).\]
Moreover, there are $u^{*}=(u^*_1,\dots,u^*_n)^t$ and $w^{*}=(w^*_1,\dots,w^*_n)^t$ such that
\[(R_{0}:I)=u_{1}^{*}\Z+\cdots+u_{n}^{*}\Z,\ (R_{0}:R)=w_{1}^{*}\Z+\cdots+w_{n}^{*}\Z,\]
where $A^{t}u^{*}=\beta u^{*}$ and $C_R^{t}w^{*}=\beta w^{*}$. Repeating the earlier construction, we obtain the conjugacy
\[ \left(\bigoplus_{i=1}^{k} A^{t}\right)\circ N= N\circ \hat C_R^t\]
for some $N\in GL_{nk}(\Z)$.

%Transposing this conjugacy gives $N^t(\hat C_R^t)^t = (\bigoplus_{i=1}^k A) N^t$, where
%\[ (\hat C_R^t)^t = \begin{pmatrix} C_R & 0 \\ T^t & E^t\end{pmatrix}.\]

%Although this does not give the same kind of block conjugacy, with  $A$ and $C_{R}$ in reversed roles, as before, from the transpose equation
%\[(\oplus_{i=1}^{k} A) N^{t}=N^{t}\hat C_R\]
%we may still state that there is an embedding $S$ of $\T^{n}$ into $(\T^{n})^{k}$, obtained explicitly by multiplying $C_{R}$ with the first column of the inverse of $C_R^{t}$, such that  the following diagram commutes
%\[\begin{array}{ccccc} \T^{n} & \stackrel{V^{t} \circ S}{\rightarrow} & (\T^{n})^{k} & \stackrel{\oplus A}{\rightarrow} & (\T^{n})^{k}\\\\
%\searrow  S &  (\T^{n})^{k} & \stackrel{C_{R}\times 0 \times \cdots \times 0}{\rightarrow} & (\T^{n})^{k} & \nearrow V \end{array} \]


\begin{thebibliography}{99}

\bibitem{AP} (MR0193181)
  \newblock R. Adler and R. Palais,
  \newblock \emph{Homeomorphic conjugacy of automorphisms on the torus},
  \newblock Proc. Amer. Math. Soc., \textbf{16} (1965), 1222--1225.

\bibitem{ATW} (MR1407693)
  \newblock R. Alder, C. Tresser and P. A. Worfolk,
  \newblock \emph{Topological conjugacy of linear endomorphisms of the $2$-torus},
  \newblock Trans. Amer. Math. Soc., \textbf{349} (1997), 1633--1652.

\bibitem{BR} 
  \newblock M. Baake, and J.A.G. Roberts
  \newblock \emph{Symmetries of reversing symmetries of toral automorphisms},
  \newblock Nonlinearity \textbf {14} (2001), R1-R24.

\bibitem{BMR} (MR2885793)
  \newblock L.F. Bakker and P. Martins Rodrigues,
  \newblock \emph{A profinite group invariant for hyperbolic toral automorphisms},
  \newblock Discrete Contin. Dyn. Syst. \textbf{32} (2012), 1965-1976.
  
\bibitem{Coh} (MR1228206)
  \newblock H. Cohen,
  \newblock ``A Course in Computational Algebraic Number Theory,''
  \newblock Graduate Texts in Mathematics, \textbf{138}, Springer-Verlag, Berlin, 1993.

\bibitem{DTZ} (MR0140544)
  \newblock E.C. Dade, O. Taussky, and H.  Zassenhaus,
  \newblock \emph{On the theory of orders, in particular on the semigroup of ideal classes and genera of an order in an algebraic number field},
  \newblock Math. Annalen \textbf{148} (1962), 31--64.
  
\bibitem{MS1} (MR2015649)
  \newblock P. Martins Rodrigues and J. Sousa Ramos,
  \newblock \emph{Symbolic representations and ${\mathbb T}^n$ automorphisms},
  \newblock Int. J. of Bifurcation and Chaos Appl. Sci. Eng., \textbf{13} (2003), 2005--2010.
  
\bibitem{MS2} (MR2139285)
  \newblock P. Martins Rodrigues, and J. Sousa Ramos,
  \newblock \emph{Bowen-Franks groups as conjugacy invariants for ${\mathbb T}^n$-automorphisms},
  \newblock Aequationes Math. \textbf{69} (2005), 231--249.
  
\bibitem{Rob}
  \newblock C. Robinson,
  \newblock ``Dynamical Systems: stability, symbolic dynamics, and chaos,'' Second Edition,
  \newblock Studies in Advanced Mathematics, CRC Press LLC, Boca Raton,1999.
  
\bibitem{Sil}
  \newblock J.R. Silvester,
  \newblock \emph{Determinants of Block Matrices},
  \newblock The Mathematical Gazette \textbf{84} (2000), 460-467.
  
\bibitem{Ta1}
  \newblock O. Taussky,
  \newblock \emph{On a theorem of Latimer and MacDuffee},
  \newblock Canadian J. Math (1949) 300-302.
  
\bibitem{Ta2}
  \newblock O. Taussky,
  \newblock \emph{On matrix classes corresponding to an ideal and its inverse},
  \newblock Illinois J. Math. \textbf{1} (1957) 108-113.
  
\bibitem{Ta3}
  \newblock O. Taussky,
  \newblock \emph{Ideal Matrices I},
  \newblock Archive of Mathematics \textbf{13} (1962) 275-282.
  
\bibitem{Ta4} 
  \newblock O. Taussky,
  \newblock \emph{Ideal Matrices II},
  \newblock Mathematische Annalen \textbf{150} (1963) 218-225.
  
\bibitem{Ta5}
  \newblock O. Taussky,
  \newblock \emph{A result concerning classes of matrices},
  \newblock J. Number Theory \textbf{6} (1974) 64-71.
  
\bibitem{Ta6}
  \newblock O. Taussky,
  \newblock \emph{Ideal Matrices III},
  \newblock Pacific J. Math. \textbf{118} (1985) 599-601.
  
\bibitem{Ta7} (MR0898923)
  \newblock O. Taussky,
  \newblock \emph{Integral Matrices in Algebraic Number Theory},
  \newblock Canadian Math. Soc. \textbf{7} (1987) 465-466.
  
\bibitem{Ta8}
  \newblock O. Taussky,
  \newblock \emph{Ideal Matrices IV},
  \newblock in ``Current Trends in matrix theory,'' proceedings of the Third Auburn Matrix Theory Conference (1987) 361-367.
  
\bibitem{Ta} (MR0506156)
  \newblock O. Taussky,
  \newblock \emph{Connections between algebraic number theory and integral matrices},
  \newblock an appendix in H. Cohn, ``A Classical Invitation to Algebraic Numbers and Class Fields,'' Springer-Verlag (1978).

\end{thebibliography}
\end{document}